\def\NN{{\mathbb N}}
\def\RR{{\mathbb R}}
\newtheorem{Thm}{Theorem}[section]
\newtheorem{Lem}{Lemma}[section]
\newtheorem{Prop}{Proposition}[section]
\newtheorem{Rmk}{Remark}[section]
\numberwithin{equation}{section}
\def\RR{\mathbb{R}}
\def\NN{\mathbb{N}}
\def\cA{{\mathcal A}}
\def\cF{{\mathcal F}}
\def\cM{{\mathcal M}}
\def\cS{{\mathcal S}}
\def\RR{{\mathbb R}}
\def\NN{{\mathbb N}}
\begin{document}
\title[Synchronization in  dtds-RDS]{Synchronization in discrete-time, discrete-state Random Dynamical Systems}

\author[W. Huang]{W. Huang}
\address{Wen Huang: School of Mathematical Sciences, University of Science and
Technology of China, Hefei, Anhui 230026, PRC}
\email{wenh@mail.ustc.edu.cn}

\author[H. Qian]{H. Qian}
\address{Hong Qian: Department of Applied Mathematics, University of Washington, Seattle, WA 98195,
U.S.A} \email{hqian@u.washington.edu}

\author[S. Wang]{S. Wang$^*$}
\address{Shirou Wang: Department of Mathematical \& Statistical Sci,
University of Alberta, Edmonton, Alberta, Canada T6G2G1} \email{shirou@ualberta.ca}

\author[F. X.-F. Ye]{F. X.-F. Ye$^*$}
\address{Felix X.-F. Ye: Department of Applied Mathematics \& Statistics,
	Johns Hopkins University, Baltimore, MD 21218, U.S.A}
\email{xye16@jhu.edu}

\author[Y. Yi]{Y. Yi}
\address{Yingfei Yi: Department of Mathematical
\& Statistical Sci, University of Alberta, Edmonton, Alberta,
Canada T6G2G1,  and School of Mathematics, Jilin University,
Changchun 130012, PRC} \email{yingfei@ualberta.ca}

\thanks{$^*$ Corresponding authors}
\thanks{The first author was  supported by NSFC grants 11371339, 11431012 and
11731003. Other authors were  supported by a PIMS CRG grant. The
third author was also  partially supported by NSFC grants 11771026, 11471344, 
and acknowledges PIMS PTCS and the PIMS at University of Washington supported by NSF DMS-1712701 and NDF DMS-1444084.
The fifth author was also partially supported by NSERC discovery grant 1257749, PIMS CRG grant, a faculty development grant from the Univ. of Alberta, and a Scholarship from Jilin
Univ. The work is partially done when the first, second and
forth authors were visiting UA and the third author was
visiting USTC and UW}

\subjclass[2010]{Primary 37H15; Secondary 34D06.}

\keywords{Synchronization, Discrete-time, Discrete-state, Random
dynamical systems, Lyapunov exponents, Linear cocycles}

\begin{abstract} We characterize synchronization phenomenon 
in discrete-time,
discrete-state random dynamical systems, with random and
probabilistic Boolean networks as  particular examples. In terms of
multiplicative ergodic properties of the induced linear cocycle, we
show such a random dynamical system with finite state
synchronizes if and only if the Lyapunov exponent $0$ has simple
multiplicity. For the case of countable state space, characterization
of synchronization is provided in term of the spectral subspace corresponding to the Lyapunov exponent $-\infty$. In addition, for both cases of finite and countable state spaces, the mechanism of partial synchronization is described by partitioning the state set into synchronized subsets. Applications to  biological networks are also discussed.
\end{abstract}
\maketitle

\section{Introduction}

Deterministic dynamics with discrete-time steps in a discrete-state space
has a long tradition since the work of von Neumann on
automata in 1950s \cite{Von}.  The subject was significantly developed in 1970s \cite{Wol} parallel to the rise of nonlinear dynamical systems theory.  For complex dynamics that arise in natural and social sciences, statistical physics employs a stochastic representation of dynamical behavior and phenomena.  Stochastic processes and random dynamical systems (RDS) are two distinctly different types of models that generalize, respectively, traditional differential equations and deterministic dynamical systems: The former represents the stochastic movement of an individual system with {\em intrinsic} uncertainties; the latter describes the motions of many individuals under a common deterministic law that is randomly changing with time due to {\em extrinsic} noises \cite{YWQ}. The dynamics of an RDS may exhibit a counterintuitive phenomenon called noise-induced synchronization: The stochastic motions of noninteracting systems with different initial states synchronize under a common noisy law of motion; their individual trajectories converge to one stochastic motion.

This paper concerns the study of  synchronization phenomenon in a 
discrete-time, discrete-state random dynamical system. More
precisely, let ${\mathcal S=\{s_j\}}$ denote the {\em state set}
which can be either finite or countable and furnished with the
discrete topology. For a given measure-preserving map $\theta$ on a
probability space $(\Omega,\cF,\mu)$, where $\mu$ is a probability
measure defined on the $\sigma$-algebra $\cF$ of $\Omega$, a {\em
discrete-time, discrete-state random dynamical system} ({\em
dtds-RDS} for short) is a random cocycle $\cA(n,\cdot): \cS\to\cS$,
$n\in\NN$, over $(\Omega,\cF,\mu,\theta)$, i.e., for each $n$,
$\{\cA(n,\omega): \omega\in\Omega\}$ is a measurable family of
continuous mappings  on $\cS$ which satisfies the cocycle property
over the metric dynamical system $(\Omega,\cF,\mu,\theta)$ (see
Section 2.1). $\cA$ is called a {\em finite-state random dynamical
system} ({\em finite-state RDS} for short) or a {\em countable-state
random dynamical system} ({\em countable-state RDS} for short), when
$\cS$ is a finite set or a countable set  respectively. 

The cocycle $\cA$ admits a unique {\em matrix representation} $\cM,$ called the {\em induced linear cocycle}:
\begin{eqnarray}\label{M}
\cM(n,\omega)&=&(\cM_{s_is_j}(n,\omega)),\nonumber\\
\cM_{s_is_j}(n,\omega)&=&
\begin{cases}1,&s_i=\cA(n,\omega)s_j,
\\0,&\text{otherwise},
\end{cases}
\;\, i,j=1,2,\cdots,
\end{eqnarray}
where $n\in\NN,\ \omega\in\Omega$. We will show in Section~2  that $\cM$
is indeed a linear cocycle over $(\Omega,\cF,\mu,\theta)$ acting on
$\RR^k$, if $\#\cS=k$,  and on $\ell^1$, if $\#\cS=\infty$ (see
Lemmas~\ref{composition property},~\ref{cocycle}).

In parallel to continuous-state RDS theory, the framework of
dtds-RDS is more practical and has enjoyed a wide range of
applications in science and engineering \cite{MQY,YWQ}.
Particularly fitting examples include the random and probabilistic
Boolean networks. The random Boolean network, introduced in 1969 by
S. Kauffman \cite{Kau1,Kau2} as a simple model for gene regulatory
networks, has two state variables $0,1$ representing ``off'' and
``on'' states of a gene respectively. A network of genes evolves according to a given
Boolean function. A random Boolean network concerns randomly chosen
initial data \cite{Dro}, while a probabilistic Boolean network
involves  randomly chosen i.i.d. Boolean functions to build a
``randomly chosen constituent network'' with deterministic Boolean
dynamics in a ``random period of time'' \cite{SD}.   Since then,
different forms of Boolean network dynamics has found applications
in neural computations, gene networks, as well as Boltzmann machines
that led the current deep learning \cite{Ackley,GeQian08, Hop,LLOT,Zhang}.

To describe a more general random or stochastic Boolean
network, we let $\cS$ be the set of Boolean variables on the nodes
of a network, $\Omega$ be the probability space assembling all
possible randomness, noise, or stochasticity in the network, and
$\alpha(\omega): \mathcal S\to\mathcal S$, $\omega\in\Omega$ be a
measurable family of state transition maps determined by a set of Boolean functions describing the connectivity
of the network. Then
\[
{\cA}(n,\omega)=\alpha(\theta^{n-1}\omega)\circ \cdots \circ
\alpha(\theta(\omega))\circ \alpha(\omega), \qquad n\in \NN
\]
defines the random cocycle of the corresponding dtds-RDS and the 
and the induced linear cocycle $\cM$ are the adjacency matrices of the
network. With such a general setting, not only do we allow any
finite or countable number of state variables, but also the
randomness involved is also made general which particularly allows
the dependency on the past history. We remark that unlike the case
of a discrete stochastic process which emphasizes intrinsic
stochasticity in the movement of each and every individual, the
dtds-RDS modeling approach emphasizes the randomness in the ``law of
motion" for an entire population of individuals which are governed
by the same law \cite{MQY}.

When   $\cA$ is a finite-state RDS, it generates a random subshift
of finite type of the $\cS$-symbolic random skew-product flow over
$(\Omega,\cF,\mu,\theta)$, with induced cocycle $\cM$ being the
random transition matrices. Indeed, let $\Sigma$ denote the set
$\cS^{\NN}$ of all sequences of elements of $\cS$ endowed with the
product topology, together with the left-shift operator $T$. Then
\[
\Sigma_{\cA}(\omega)=\{(s_0,s_1,\cdots): s_i\in\cS,\;
\cM_{s_is_{i+1}}(1,\theta^i\omega)=1\},\qquad \omega\in\Omega
\]
become a random
 subshift  of finite type in $\Sigma\times \Omega$, i.e., $T^n\Sigma_\mathcal A(\omega)\subseteq\Sigma_\mathcal A(\theta^n\omega)$,
$n\in\NN$. We note that when $\cA$ is not a finite-state RDS, a
similar generating symbolic dynamical system is undefined and thus
the cocycle $\cA$ is a more general description of the corresponding
dtds-RDS. We note that the above notion of a countable-state RDS
allows the dynamical consideration of a wide range of applications
including random lattices and infinite networks.

A well-developed concept in the random dynamical systems theory is
synchronization (see \cite{Bax, FGS,  LeJan, Newman}
and reference therein), which is intimately related to the ``random attractor" in
random  as well as in non-autonomous dynamical systems. Roughly
speaking, synchronization describes the phenomenon that for almost
surely two different initial states collapse into a single one after
sufficiently long time. This related property also has interests in
neuron biology. In \cite{Lin}, synchronization is well discussed for
neuron networks, which are formulated by a system of stochastic
differential equations with white noise which is considered as the
stimulus to the network. If along almost each single stimulus
realization the response of the network always remains the same
independent of which initials it starts from (i.e., synchronized),
then the neuron network is said to be reliable. Under general
conditions, the sign of the maximal Lyapunov exponent is set as a
criterion for the reliability of the network: the negativity of
maximal Lyapunov exponent implies reliability and positivity implies
unreliability \cite{Lin}.


In \cite{YWQ}, the notion of synchronization for a dtds-RDS  is
introduced and investigated under the spirit that an i.i.d dtds-RDS
is a more refined model for stochastic systems than its  counter
part - the Markov chain. Adopting this notion to the present
setting, given a dtds-RDS $\cA$ over $(\Omega,\cF,\mu,\theta)$ and  $\omega\in\Omega,$ a pair $\{s, s^\prime\}\subset\mathcal S$ is said to
\emph{$\omega$-synchronize}  if there
exists $n(\omega)\in\mathbb N$ such that
$$\mathcal A(n,\omega)s=\mathcal A(n,\omega)s^\prime,\ \;\,
n\geq n(\omega).$$
$\cA$ is said
to {\em synchronize} if for $\mu$-a.e. $\omega\in\Omega,$ any pair $\{s,s'\}\subset\cS$ $\omega$-synchronizes.


In this paper, refining and generalizing results in \cite{YWQ}, we
give an equivalent characterization of  synchronization for a
dtds-RDS from the viewpoint of the multiplicative ergodic theory.
Our main results state as follows.
\medskip

\noindent{\bf Theorem A.} {\em Consider a finite-state RDS $\cA$
with the $k$-state set $\mathcal S=\{s_1,\cdots,s_k\}$. Then the
following holds:
\begin{itemize}
\item[(i)] For any $\omega\in\Omega,$ the Lyapunov exponents
 of the induced linear cocycle $\mathcal M$ acting on $\RR^k$ exist and take at least the
 value $\lambda_1(\omega)=0$  and possibly another value
 $\lambda_2(\omega)=-\infty.$
 \item[(ii)] For any $\omega\in\Omega,$ there exists a partition of $\mathcal S:$
     $$\eta(\omega)=\{W_1(\omega),\cdots, W_{m_1(\omega)}(\omega)\},$$
     where $m_1(\omega)$ is the multiplicity of the Lyapunov
     exponent $\lambda_1(\omega)=0$,
     such that a pair $\{s, s^\prime\}\subset \cS$ $\omega$-synchronizes if and only if  $s,s^\prime\in W_i(\omega)$
     for some integer $1\le i\le m_1(\omega).$

\item[(iii)] The dtds-RDS $\cA$ synchronizes if and only if for $\mu$-a.e. $\omega\in\Omega,$ $\cM$ admits precisely two Lyapunov exponents
$\lambda_1(\omega)=0$, $\lambda_2(\omega)=-\infty$ with respective multiplicities
$m_1(\omega)=1,\ m_2(\omega)=k-1$.

\end{itemize}
}

\medskip
For each $\omega\in\Omega$, the $\omega$-synchronized partition
$\eta(\omega)$ stated in Theorem A~(ii) can be of course defined
through the equivalence relation that $s_i\sim s_j$ in $\cS$ if and
only if $\{s_i,s_j\}$ $\omega$-synchronizes. We note that not only
does Theorem A~(ii) characterize the number of equivalence classes
but also the proof of it gives constructive descriptions of the
synchronized subsets $\{W_i(\omega)\}$ explaining the mechanism of
partial synchronization.

In applying Theorem A to a particular finite-state RDS $\cA,$ of importance is the characterization of the multiplicity  $m_1(\omega)$ of the $0$ Lyapunov exponent rather than the Lyapunov exponent itself because it is always attained by (i) above. One useful approach in making such a characterization for a particular model with ergodic $\mu$  is to combine Theorem A with the multiplicative ergodic theorem (see Theorem \ref{MET}) to show that $m_1(\omega)$ is a constant for $\mu$-a.e. $\omega$. This then gives a characterization on whether $\cA$ is synchronized or otherwise its  number of synchronized subsets. We will demonstrate such applications in Section 5 with a p53 random network.

For a countable-state RDS, the $\omega$-synchronized partition and
subsets can be defined through the same equivalence relation, but
similar constructive descriptions of synchronized subsets are not
available. In addition, unlike the finite-state case, Lyapunov
exponents for a countable-state RDS need not exist in general, and
even they do in some special situation, there can be uncountably
many of them (see Remark~\ref{countable}). Nevertheless, we have the
following result.

\medskip

\noindent{\bf Theorem B.} {\em Consider a countable-state RDS $\cA$
with the state set  $\mathcal S=\{s_i:\ i\in\NN\}$. Then the
following holds:
\begin{itemize}
\item[(i)] For any $\omega\in\Omega,$ let
     $$\eta(\omega)=\{W_i(\omega):i\in\mathcal I_\omega\}$$
     be the $\omega$-synchronized partition of $\cS$,
     where $\mathcal I_\omega$ is finite or countable and for each  $i\in\mathcal I_\omega$,
     $W_i(\omega)$ is a synchronized subset, i.e., $s, s^\prime\in
W_i(\omega)$ if and only if
 $\{s, s^\prime\}\subset \cS$  $\omega$-synchronizes.  Then
$$\overline{\{\boldsymbol{v}\in\ell^1:\lambda(\omega,\boldsymbol{v})=-\infty\}}=\{\boldsymbol{v}\in\ell^1:
\sum\limits_{j: s_j\in W_i(\omega)}v_j=0, \ \forall i\in\mathcal
I_\omega\},$$ where $\lambda(\omega,v)$ is the Lyapunov exponent of
the induced linear cocycle $\mathcal M$ associated with $\omega,v$
defined in \eqref{LE-for-countable}.

\item[(ii)] The dtds-RDS $\cA$ synchronizes if and only if for $\mu$-a.e. $\omega\in\Omega$ the Lyapunov exponent $-\infty$ is attained
and
\begin{eqnarray*}
\overline{\{\boldsymbol{v}\in\ell^1:\lambda(\omega,\boldsymbol{v})=-\infty\}}
=\left\{\boldsymbol{v}\in\ell^1:\sum\limits_{i=1}^{+\infty}v_i=0\right\}.
\end{eqnarray*}
\end{itemize}
}
\medskip

We note that Theorem B (ii) implies that if a countable-state RDS
$\mathcal A$ synchronizes,  then the closure of the spectral
subspace associated with the Lyapunov exponent $-\infty$ has
codimension-$1$ for $\mu$-a.e. $\omega\in\Omega.$

The paper is organized as follows. In Section \ref{RDS and MET}, we
study the induced linear cycles and their ergodic properties. In
particular,  Lyapunov exponents are characterized for the
finite-state case and Theorem A~(i) is proved. Basic notions of
random dynamical systems and a general multiplicative ergodic
theorem are also recalled. Section \ref{sec:synchronization for
finite RDS} is devoted to the analysis of synchronization phenomenon
for a finite-state dtds-RDS. We give a characterization of full
synchronization with respect to the Lyapunov exponent $0$ and a
characterization of partial synchronization with respect to
synchronized subsets. Theorem A~(ii),~(iii) are proved. Similar
analysis is conducted in Section \ref{sec:syn for countable state
space} for the countable-state case. In particular, we give a
characterization of full synchronization with respect to the
spectral subspace of the Lyapunov exponent $-\infty$ and a
characterization of partial synchronization with respect to
synchronized subsets. Theorem~B~(i),~(ii) are proved.
In Section 5, we give an example of probabilistic Boolean networks, i.e., the p53 random network, to demonstrate applications of Theorem A. Some discussions on similar applications in more complicated random networks are also given. 

\section{Ergodic properties of dtds-Random dynamical systems}\label{RDS and MET}
In this section, we study basic ergodic properties of the dtds-RDS,
and in particular, we show a multiplicative ergodic theorem for the 
induced linear cocycle. Notions of random dynamical systems,
cocycles,  and the classical multiplicative ergodic theorem will be
recalled for the case of discrete time variable.

\subsection{Matrix representations as a cocycle} Let $(\Omega,\mathcal F, \mu)$
be a probability space, where $\mu$ is a  probability measure
defined on the $\sigma$-algebra $\cF$ of $\Omega$.  $(\Omega,\mathcal F, \mu, \theta)$
 is called a {\em metric dynamical system} if $\theta:\Omega\to\Omega$ is a
measurable, measure-preserving transformation, i.e.,
$\mu(\theta^{-n}B)=\mu(B)$, $n\in \NN$, $B\in \cF$. Let $X$ be a
topological space, called {\em state space}. A {\em cocycle over
$(\Omega,\mathcal F, \mu, \theta)$} acting on $X$ is a family $\{\Phi(n,\omega),\ n\in
\NN,\ \omega\in\Omega\}$ of continuous mappings on $X$ which are measurable
in $\omega$ and satisfy the following {\em  cocycle property}:
\begin{eqnarray*}
\Phi(0,\omega)=id,\ \ \Phi(m+n,\omega)=\Phi(n,\theta^m \omega)\circ\Phi(m,\omega), \;\,
m,n\in\mathbb N,\; \omega\in\Omega.
\end{eqnarray*}
With the cocycle property, it is clear that the mapping
$\phi:\NN\times X\times Y\to X\times Y$:
$$
\phi(n,x,\omega)=(\Phi(n,\omega)x,\theta^n \omega),\qquad n\in\NN,\, x\in X,\, \omega\in\Omega
$$
defines a measurable skew-product flow on the phase space $X\times
\Omega$, called a {\em random dynamical system}. If the state space $X$
is a normed vector space and $\Phi(n,\omega)$ is a bounded linear
operator for each $n\in\NN, \omega\in\Omega$, then $\Phi$ is called a {\em
linear cocyle} over $(\Omega,\mathcal F, \mu, \theta)$ acting on $X$.

\vspace{2mm}

Now consider the dtds-RDS  $\cA$ described   in Section~1. Let $\cM$
be the matrix representation of $\cA$ defined in \eqref{M}.
\medskip

\begin{Lem}\label{composition property} $\cM$ depends on $\omega\in\Omega$ measurably and  satisfies the cocycle
property over $(\Omega,\cF,\mu,\theta)$.
\end{Lem}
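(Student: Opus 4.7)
The plan is to verify the two separate assertions: measurability of $\omega \mapsto \cM(n,\omega)$ (entrywise), and the cocycle identity $\cM(m+n,\omega)=\cM(n,\theta^m\omega)\,\cM(m,\omega)$ together with $\cM(0,\omega)=I$. Both follow directly from the definition \eqref{M} once one unpacks what the entries of $\cM$ mean, and neither step is deep; the proof is essentially bookkeeping, so I will concentrate on stating the two reductions cleanly.

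For measurability, the key observation is that $\cS$ carries the discrete topology, so every singleton $\{s_i\}\subseteq \cS$ is a Borel (indeed clopen) set. By hypothesis, for each fixed $n$ and each fixed $s_j\in \cS$, the map $\omega\mapsto \cA(n,\omega)s_j$ is measurable from $(\Omega,\cF)$ to $\cS$. Hence
\[
\{\omega\in\Omega:\cM_{s_is_j}(n,\omega)=1\}=\{\omega\in\Omega:\cA(n,\omega)s_j=s_i\}
\]
is the preimage of $\{s_i\}$ under a measurable map and thus lies in $\cF$. Since each entry of $\cM(n,\omega)$ is a $\{0,1\}$-valued function whose preimage of $1$ is measurable, the matrix $\cM(n,\cdot)$ is measurable in $\omega$.

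For the cocycle property, I would first observe $\cM(0,\omega)=I$ because $\cA(0,\omega)=\mathrm{id}$ implies $\cM_{s_is_j}(0,\omega)=\delta_{ij}$. Then I would compute the $(s_i,s_j)$ entry of the product $\cM(n,\theta^m\omega)\,\cM(m,\omega)$ as
\[
\sum_{k}\cM_{s_is_k}(n,\theta^m\omega)\,\cM_{s_ks_j}(m,\omega).
\]
The crucial point, which also disposes of any convergence issue in the countable-state case, is that for each fixed $j$ there is exactly one index $k^\ast=k^\ast(m,\omega,j)$ satisfying $s_{k^\ast}=\cA(m,\omega)s_j$; thus $\cM_{s_ks_j}(m,\omega)=\delta_{kk^\ast}$ and the sum collapses to the single term $\cM_{s_is_{k^\ast}}(n,\theta^m\omega)$. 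Invoking the cocycle property of $\cA$ over $(\Omega,\cF,\mu,\theta)$,
\[
\cA(n,\theta^m\omega)\,s_{k^\ast}=\cA(n,\theta^m\omega)\cA(m,\omega)s_j=\cA(m+n,\omega)s_j,
\]
so this entry equals $1$ precisely when $s_i=\cA(m+n,\omega)s_j$, i.e.\ equals $\cM_{s_is_j}(m+n,\omega)$. This yields $\cM(m+n,\omega)=\cM(n,\theta^m\omega)\,\cM(m,\omega)$.

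The main conceptual point, which I would flag explicitly for the countable-state case, is that the column-sparsity of $\cM(m,\omega)$ (exactly one $1$ per column, since $\cA(m,\omega)$ is a single-valued map) makes the matrix product well-defined without any summability assumption; in the finite-state case this is automatic, but in the countable setting it is the reason the argument goes through without appealing to $\ell^1$-boundedness. After this observation, the entire lemma is routine.
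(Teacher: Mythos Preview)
Your proof is correct and follows essentially the same approach as the paper: both derive the cocycle identity for $\cM$ from the cocycle property of $\cA$ together with the functoriality $M_{f_1}M_{f_2}=M_{f_1\circ f_2}$ of the matrix representation, and both derive measurability from that of $\cA$. The paper packages the multiplicativity as an abstract identity for $M_f$ with $f\in\Gamma$ and leaves its verification and the measurability step implicit, whereas you carry out the entrywise computation and the measurability argument explicitly (and also note why the countable sum collapses in the infinite-state case), but the underlying argument is the same.
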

\begin{proof} Denote $\cS=\{s_i\}$ and let $\Gamma=\{f:\mathcal
S\rightarrow\mathcal S\}$ be the collection of all deterministic self-maps of $\cS.$  For  each $f\in \Gamma,$ define  $M_f=:( (M_f)_{ij})$ as follows
\begin{equation}\label{Mf}
 (M_f)_{ij}=
\begin{cases}1,&s_i=fs_j,
\\0,&\text{otherwise},
\end{cases}
\ \; i,j=1,2,\cdots.
\end{equation}
It is easy to see that $\{M_f\}$ satisfies
\begin{equation}\label{matrix cocycle}
M_{f_1}\cdot M_{f_2}=M_{f_1\circ f_2},\quad f_1,f_2\in\Gamma.
\end{equation}
Note that for fixed $n\in\NN$ and $\omega\in\Omega,$ the matrix representation $\cM$ in  \eqref{M} satisfies $\cM(n,\omega)=M_{\cA(n,\omega)}.$ The lemma now follows from \eqref{matrix cocycle}, the measurability
of $\cA$ in $\omega$, and   the cocycle property of $\cA$.
\end{proof}

The following lemma shows that $\cM$ is a linear cocycle acting on a suitable state space.

\begin{Lem}\label{cocycle}
Let $\cA$ be a dtds-RDS with matrix representation $\cM.$ 

(i) If $\cA$ is a finite-state RDS with $k$-state set, then 
$\cM$ is a linear cocycle over $(\Omega,\cF,\mu,\theta)$ acting on  $\RR^k;$

(ii) If $\cA$ is a countable-state RDS, then $\cM$ is a linear cocycle over $(\Omega,\cF,\mu,\theta)$ acting on $\ell^1:=\{\boldsymbol {v}=(v_i)_{i=1}^{+\infty}:\sum\limits_{i=1}^{+\infty}|v_i|<+\infty\}.$
\end{Lem}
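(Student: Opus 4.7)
The plan is to appeal to Lemma~\ref{composition property}, which already supplies the measurability of $\cM(n,\cdot)$ in $\omega$ and the cocycle identity $\cM(m+n,\omega)=\cM(n,\theta^m\omega)\cdot\cM(m,\omega)$. What remains in each setting is to verify that every $\cM(n,\omega)$ is a bounded (equivalently, continuous) linear operator on the specified state space.

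Part (i) is essentially immediate. In finite dimension every matrix defines a continuous linear operator, so each $\cM(n,\omega)\in\{0,1\}^{k\times k}$ trivially acts as a bounded linear map on $\RR^k$; combined with Lemma~\ref{composition property}, this gives that $\cM$ is a linear cocycle over $(\Omega,\cF,\mu,\theta)$ on $\RR^k$.

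For part (ii), I would fix $n\in\NN$ and $\omega\in\Omega$, write $f=\cA(n,\omega)\in\Gamma$, and use the identity $\cM(n,\omega)=M_f$ from the proof of Lemma~\ref{composition property}. The key structural observation I would exploit is that, since $f$ is a function $\cS\to\cS$, each column of $M_f$ contains exactly one $1$ and all other entries are $0$; equivalently,
\[
(M_f\boldsymbol{v})_i \;=\; \sum_{j:\, fs_j = s_i} v_j.
\]
Because the preimage sets $\{j:\, fs_j=s_i\}$, $i\in\NN$, form a partition of $\NN$, I would then estimate, for $\boldsymbol{v}\in\ell^1$,
\[
\|M_f\boldsymbol{v}\|_{\ell^1} \;=\; \sum_{i=1}^{\infty} \Bigl|\sum_{j:\, fs_j=s_i} v_j \Bigr| \;\le\; \sum_{i=1}^{\infty}\sum_{j:\, fs_j=s_i} |v_j| \;=\; \sum_{j=1}^{\infty} |v_j| \;=\; \|\boldsymbol{v}\|_{\ell^1}.
\]
This simultaneously yields $M_f\boldsymbol{v}\in\ell^1$, hence well-definedness of $M_f$ on $\ell^1$, and the operator-norm bound $\|M_f\|_{\ell^1\to\ell^1}\le 1$, whence continuity. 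Together with Lemma~\ref{composition property}, this proves (ii).

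There is no genuine obstacle here, only one subtlety worth flagging: the bookkeeping above uses the per-\emph{column} structure of $M_f$, which is exactly what the $\ell^1$-norm is tailored to. In our setting $f$ is typically neither injective nor surjective, so different $s_j$'s may collapse onto the same $s_i$ (handled by the triangle inequality above) and some rows of $M_f$ may be entirely zero (harmless, as these contribute nothing to the sum). The choice of $\ell^1$ in the statement is precisely what makes the telescoping to $\|\boldsymbol{v}\|_{\ell^1}$ work.
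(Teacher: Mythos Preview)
Your proposal is correct and matches the paper's own proof essentially line for line: both parts reduce to checking that each $\cM(n,\omega)$ is a bounded linear operator (trivial in (i), and in (ii) via the same estimate $\|M_f\boldsymbol v\|_1=\sum_i|\sum_{j:fs_j=s_i}v_j|\le\sum_j|v_j|=\|\boldsymbol v\|_1$), with Lemma~\ref{composition property} supplying measurability and the cocycle identity. Your write-up is in fact slightly more explicit than the paper's in noting that the preimage sets partition $\NN$ and in recording the operator-norm bound $\|M_f\|\le 1$.
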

\begin{proof} 
(i)	Since for each $n\in\NN,$ $\cM(n,\omega)$  is a $k\times k$ matrix, $\cM$ is a linear cocycle acting on  $\RR^k$ by noting that any $k\times k$ matrix  is a bounded linear operator on $\RR^k$ with respect to the standard Euclidean norm.
	
(ii) We only need
to verify that for any fixed $n\in\NN$ and $\omega\in \Omega,$ $M_{\cA(n,\omega)}$  defined in \eqref{M} is a bounded linear operator on $\ell^1.$ For any $\boldsymbol {v}=(v_i)_{i=1}^{+\infty}\in \ell^1$, denote
$(u_i)_{i=1}^{+\infty}=\boldsymbol u=M_{\cA(n,\omega)}\boldsymbol{v}$. Then
\begin{eqnarray*}
u_i=\begin{cases}
0, &\cA(n,\omega)s_j\neq s_i\ \text{for\ all}\ j\geq1\\
\sum\limits_{j:\cA(n,\omega)s_j=s_i}v_j, &\text{otherwise},
\end{cases}
\end{eqnarray*}
and therefore
\begin{eqnarray}\label{norm inequality}
\sum\limits_{i=1}^{+\infty}|u_i|=\|\boldsymbol
u\|_1=\sum\limits_{i=1}^{\infty}|\sum\limits_{j:\cA(n,\omega)s_j=s_i}v_j|
\leq\sum\limits_{i=1}^{+\infty}|v_i|<+\infty.
\end{eqnarray}
\end{proof}
 

\begin{Rmk}  We note that for a countable-state RDS $\cA,$ the induced linear cocycle $\cM$ needs not define a cocycle acting on the
state space $\ell^p$ for $1<p\leq+\infty$.  For instance, if for some $n\in\NN$ and $\ \omega\in\Omega,$ 
it satisfies that $\cA(n,\omega)s_i=s_1$ for all $i=1,2,\cdots$.  Then  $\boldsymbol
v=(1,1/2,\cdots,1/n,\cdots)\in\ell^p$ for any $1<p\leq+\infty$, but
\[M_{\cA(n,\omega)}\boldsymbol v=(1+1/2+\cdots+1/n+\cdots,0,\cdots)\notin\ell^p\]
because $\sum\limits_{n=1}^{+\infty}\dfrac{1}{n}=+\infty.$
 
\end{Rmk}

\subsection{Multiplicative ergodic properties of dtds-RDS}
Following the celebrated  work of Oseledec's \cite{Oseledets},
multiplicative ergodic theory has been substantially developed for
linear cocycles. Below, we state a version of multiplicative ergodic
theorem on finite dimensional state space. Denote
$\log^+(\cdot)$=$\max\{\log(\cdot), 0\}.$

\begin{Thm}\label{MET}{\rm(Theorem 3.4.1 in \cite{Arnold})} Consider a linear cocycle
$\Phi$ over a metric dynamical system $(\Omega,\mathcal F, \mu, \theta)$ acting on
the state space $\RR^k.$ Assume that $\log^+\|\Phi(1,\cdot)\|\in L^1(\Omega,\mathcal F, \mu).$ Then there exist an integer-valued,
measurable function $r,$ real-valued, measurable functions
$\{\lambda_i\}_{i=1}^r$ with $\lambda_r$ possibly being $-\infty,$
integer-valued, measurable functions $\{m_i\}_{i=1}^r$ with
$\sum\limits_{i=1}^rm_i=k,$ and a measurable filtration $\mathbb
R^k=V_1\supsetneqq\cdots\supsetneqq V_{r}\supsetneqq
V_{r+1}=\emptyset$ such that for $\mu$-a.e. $\omega\in\Omega,$ the following
holds.
\begin{itemize}
\item[{\rm (i)}] {\rm (}invariance{\rm )} $r(\theta\omega)=r(\omega)$,
$\lambda_i(\theta\omega)=\lambda_i(\omega)$, $m_i(\theta\omega)=m_i(\omega)$, \ $i=1,\cdots,r(\omega)$;

\item[{\rm (ii)}] {\rm (}dimensionality{\rm )} $\dim V_{i}(\omega)-\dim
V_{i+1}(\omega)=m_{i}(\omega),\;\,i=1,\cdots,r(\omega);$

\item[{\rm (iii)}] {\rm (}exponential growthness{\rm )} For any
$\boldsymbol{v}\in V_{i}(\omega)\backslash V_{i+1}(\omega),\;
i=1,\cdots,r(\omega),$
\begin{equation*}\label{Lya}
\lim\limits_{n\rightarrow+\infty}\dfrac{1}{n}\log\|\Phi(n,\omega)\boldsymbol
v\|=\lambda_{i}(\omega).
\end{equation*}
\end{itemize}
Moreover, if $\mu$ is ergodic, then all $\lambda_i(\omega)$'s, $m_i(\omega)$'s and $r(\omega)$ are constants for $\mu$-a.e. $\omega\in\Omega.$
\end{Thm}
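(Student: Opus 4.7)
\medskip

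The plan is to prove this as a version of Oseledec's multiplicative ergodic theorem, using Kingman's subadditive ergodic theorem as the main engine and the exterior algebra construction to extract the full spectrum of Lyapunov exponents together with the filtration. Throughout, the integrability assumption $\log^+\|\Phi(1,\cdot)\|\in L^1(\Omega,\cF,\mu)$ will ensure that every sequence of norms (or singular values, or exterior-power norms) to which we wish to apply Kingman's theorem is subadditive and has an integrable positive part.

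First, I would extract the top Lyapunov exponent. The cocycle property gives
\[
\log\|\Phi(n+m,\omega)\|\le \log\|\Phi(n,\theta^m\omega)\|+\log\|\Phi(m,\omega)\|,
\]
so $f_n(\omega)=\log\|\Phi(n,\omega)\|$ is subadditive over $\theta$. Kingman's theorem then yields a measurable, $\theta$-invariant function $\lambda_1(\omega)\in[-\infty,+\infty)$ with $\frac1n f_n(\omega)\to\lambda_1(\omega)$ $\mu$-a.e. To pick up the lower exponents and their multiplicities, I would repeat this argument on the induced linear cocycles $\wedge^p\Phi$ acting on the exterior powers $\wedge^p\RR^k$, $p=1,\dots,k$. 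Set
\[
\gamma_p(\omega)=\lim_{n\to\infty}\tfrac1n\log\|\wedge^p\Phi(n,\omega)\|,
\]
which exists by Kingman again. The distinct Lyapunov exponents $\lambda_1(\omega)>\lambda_2(\omega)>\cdots>\lambda_r(\omega)$ and their multiplicities $m_i(\omega)$ are then recovered from the differences $\gamma_p-\gamma_{p-1}$, which encode the growth rates of the singular values of $\Phi(n,\omega)$; the number of distinct values of these differences gives $r(\omega)$ and the count of equal consecutive differences gives each $m_i(\omega)$. The invariance $r(\theta\omega)=r(\omega)$, $\lambda_i(\theta\omega)=\lambda_i(\omega)$, $m_i(\theta\omega)=m_i(\omega)$ is inherited directly from the $\theta$-invariance of the $\gamma_p$.

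Next, I would construct the filtration. Define
\[
V_i(\omega)=\bigl\{\boldsymbol v\in\RR^k:\limsup_{n\to\infty}\tfrac1n\log\|\Phi(n,\omega)\boldsymbol v\|\le\lambda_i(\omega)\bigr\},
\]
with the convention $\log 0=-\infty$. Each $V_i(\omega)$ is a linear subspace (the $\limsup$ is subadditive in $\boldsymbol v$ up to a factor of $\log 2$), measurable in $\omega$, and $\{V_i(\omega)\}$ is a decreasing chain with $V_{r+1}(\omega)=\{0\}$. The cocycle property gives $\Phi(1,\omega)V_i(\omega)\subseteq V_i(\theta\omega)$, hence equality on a full-measure set by dimension. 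The heart of the matter, and the main obstacle, is the strict exponential growthness in (iii): for $\boldsymbol v\in V_i(\omega)\setminus V_{i+1}(\omega)$, the $\limsup$ must be an honest limit and must equal $\lambda_i(\omega)$ exactly. This lower bound is the delicate part; I would obtain it by combining the polar (singular value) decomposition of $\Phi(n,\omega)$ with a Fürstenberg--Kesten type argument showing that the singular-value directions stabilize, or equivalently by using the exterior power asymptotics $\gamma_p(\omega)=\lambda_1(\omega)+\cdots+\lambda_p(\omega)$ counted with multiplicity to force any loss of exponential rate to push $\boldsymbol v$ into $V_{i+1}(\omega)$. The dimension count $\dim V_i(\omega)-\dim V_{i+1}(\omega)=m_i(\omega)$ then follows by comparing the total dimensions.

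Finally, the ergodic case is immediate: all the functions $\gamma_p$, $r$, $\lambda_i$, $m_i$ constructed above are $\theta$-invariant, so by ergodicity of $\mu$ they are $\mu$-a.e. constant. The main technical obstacles in this plan are the uniform singular-value control needed for the strict equality in (iii) and the measurable selection of the filtration subspaces $V_i(\omega)$; both are handled in Arnold's proof (Theorem 3.4.1 in \cite{Arnold}) via the Oseledec--Raghunathan construction, which is the route I would follow.
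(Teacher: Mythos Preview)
The paper does not prove this theorem at all: it is stated as a known result and attributed to Arnold's book (Theorem 3.4.1 in \cite{Arnold}), with no argument given. Your sketch is a correct outline of the standard Raghunathan-style proof of Oseledec's theorem via Kingman's subadditive ergodic theorem and exterior powers, and indeed you yourself note at the end that this is the route in \cite{Arnold}; so there is nothing in the paper to compare against, and for the purposes of this paper a citation suffices.
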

\medskip

Quantities $\lambda_i, m_i$, $i=1,\cdots, r$ are referred to as the
{\em Lyapunov exponents} and their {\em multiplicities},
respectively. For cocycles acting on a Banach space, Lyapunov
exponents and their multiplicities can be similarly defined,
provided that they exist.
\medskip

When restricting to the case of finite-state RDS, we have the
following refined result for the Lyapunov exponents which implies
Theorem~A~(i). 

\begin{Prop}\label{MET for finite dsdt-RDS} Let $\cA$ be a finite-state RDS and $\cM$ be the induced linear cocycle over
$(\Omega,\cF,\mu,\theta)$ acting on the state space $\RR^k$ equipped
with the standard Euclidean norm $\|\cdot\|$, where $k=\#\cS$. Then
for any $\omega\in\Omega$ and any $\boldsymbol{v}\in\mathbb R^k,$
\begin{eqnarray*}
\lambda(\omega,\boldsymbol{v}):=\lim\limits_{n\rightarrow+\infty}\dfrac{1}{n}\log\|\mathcal
M(n,\omega)\boldsymbol{v}\|
\end{eqnarray*}
exists and equals either $0$ or $-\infty$, and consequently, the
Lyapunov exponents of $\cM$  take at most two values $\lambda_1=0$
or $\lambda_2=-\infty.$ Moreover, the Lyapunov exponent
$\lambda_1=0$ is always attained.
\end{Prop}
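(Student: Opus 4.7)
\medskip

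The plan is to exploit the combinatorial structure of the 0-1 matrix $\cM(n,\omega)$ to show that $\|\cM(n,\omega)\boldsymbol{v}\|$ is \emph{eventually constant} in $n$, which forces the normalized logarithm to converge to either $0$ or $-\infty$.

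\medskip

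First, I would unpack the action of $\cM(n,\omega)$ on a vector. Since each column of $\cM(n,\omega)=M_{\cA(n,\omega)}$ has exactly one nonzero entry, a direct calculation gives
\begin{equation*}
\bigl(\cM(n,\omega)\boldsymbol{v}\bigr)_i \;=\; \sum_{j:\,\cA(n,\omega)s_j=s_i} v_j,
\end{equation*}
and consequently
\begin{equation*}
\|\cM(n,\omega)\boldsymbol{v}\|^2 \;=\; \sum_{W\in\eta_n(\omega)}\Bigl(\sum_{s_j\in W} v_j\Bigr)^{\!2},
\end{equation*}
where $\eta_n(\omega)$ denotes the partition of $\cS$ into the (nonempty) fibers of $\cA(n,\omega)$.

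\medskip

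The key observation is that $\eta_{n+1}(\omega)$ is always a coarsening of $\eta_n(\omega)$. Indeed, the cocycle identity $\cA(n+1,\omega)=\cA(1,\theta^n\omega)\circ\cA(n,\omega)$ shows that whenever $\cA(n,\omega)s_i=\cA(n,\omega)s_j$ one also has $\cA(n+1,\omega)s_i=\cA(n+1,\omega)s_j$. Therefore $|\eta_n(\omega)|$ is a non-increasing sequence of positive integers bounded above by $k$. Since a coarsening that preserves the number of blocks must be the identity coarsening, once the cardinality stabilizes the partition itself stabilizes. Hence there exists $N(\omega)\le k$ and a partition $\eta_\infty(\omega)$ with $\eta_n(\omega)=\eta_\infty(\omega)$ for every $n\ge N(\omega)$. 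Substituting into the formula above, $\|\cM(n,\omega)\boldsymbol{v}\|$ is a fixed constant $C(\omega,\boldsymbol{v})\ge 0$ for all $n\ge N(\omega)$.

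\medskip

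Splitting into cases finishes the proof: if $C(\omega,\boldsymbol{v})=0$ then $\cM(n,\omega)\boldsymbol{v}=0$ for all large $n$ and $\lambda(\omega,\boldsymbol{v})=-\infty$; otherwise $\frac{1}{n}\log\|\cM(n,\omega)\boldsymbol{v}\|\to 0$. Finally, to see that $\lambda_1=0$ is always attained, I would test $\boldsymbol{v}=(1,\dots,1)^\top$: every coordinate of $\cM(n,\omega)\boldsymbol{v}$ equals the size of the corresponding fiber, so $\|\cM(n,\omega)\boldsymbol{v}\|^2=\sum_{W\in\eta_n(\omega)}|W|^2\ge\sum_{W}|W|=k>0$, giving $\lambda(\omega,\boldsymbol{v})=0$.

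\medskip

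I do not expect a serious obstacle: the argument is essentially combinatorial and avoids the usual MET machinery. The only subtle point is the stabilization lemma for coarsening partitions, which needs to be phrased carefully (``non-increasing cardinality plus refinement forces equality''); once that is stated, the rest is algebra and the exponential growth rate is forced to be in $\{0,-\infty\}$.
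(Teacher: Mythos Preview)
Your argument is correct, and it takes a somewhat different route from the paper's. The paper observes more bluntly that there are only finitely many self-maps of $\cS$, hence only finitely many matrices that $\cM(n,\omega)$ can ever be; for a fixed $\boldsymbol{v}$ the norm $\|\cM(n,\omega)\boldsymbol{v}\|$ therefore ranges over a finite set of values, so it is either eventually zero (giving $-\infty$) or bounded between two fixed positive constants (giving $0$). Your approach is slightly longer but yields the sharper conclusion that $\|\cM(n,\omega)\boldsymbol{v}\|$ is \emph{eventually constant}, not merely bounded, and in deriving this you essentially anticipate Lemma~\ref{xi_n} (the monotone coarsening of the fiber partitions) from Section~\ref{sec:syn-subset}. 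For the attainment of $\lambda_1=0$, the paper argues by contradiction that $\cM(n,\omega)$ is never the zero matrix, while you exhibit the explicit witness $\boldsymbol{v}=(1,\dots,1)^\top$; both work, and yours is more constructive.
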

\begin{proof} We note that with $n,\omega$ varying, there are only a
finite number of choices of $\cM(n,\omega)$. It follows that for any
$\boldsymbol v\in\RR^k,$ $\|\mathcal M(n,\omega)${\boldmath$v$}$\|$
only take a finite number of different values for all $n\in\mathbb
N, \ \omega\in\Omega.$ Now we let $\omega\in\Omega$ be fixed.

If $\mathcal M(n,\omega)${\boldmath$v$}$\neq\mathbf 0$ for all
$n\in\mathbb N$, then $\|\mathcal M(n,\omega)${\boldmath$v$}$\|$
have uniform positive upper and lower bounds, i.e., there exist
constants $0<\kappa_1<\kappa_2<+\infty$ independent of $n,\omega$
such that $\kappa_1\leq\|\mathcal
M(n,\omega)${\boldmath$v$}$\|\leq\kappa_2$ for all $n\in\mathbb N.$
Thus, $\lim\limits_{n\rightarrow+\infty}\dfrac{1}{n}\log\|\mathcal
M(n,\omega)${\boldmath$v$}$\|$ exists and equals $0.$

If there exists $n\in\mathbb N$ such that $\mathcal
M(n,\omega)\boldsymbol{v}=\boldsymbol0$, then $\mathcal
M(m,\omega)\boldsymbol v=\mathbf 0$ for any $m\geq n.$ Thus
$\lim\limits_{n\rightarrow+\infty}\dfrac{1}{n}\log\|\mathcal
M(n,\omega)${\boldmath$v$}$\|=-\infty.$

We now argue that the Lyapunov exponent $\lambda_1=0$ is
always attained. For otherwise, there exists an $n\in\mathbb N$ such
that $\mathcal M(n,\omega)\boldsymbol v=0$ for all $\boldsymbol
v\in\mathbb R^k$. This is impossible since $\mathcal M(n,\omega)$ is
not a zero matrix.

\end{proof}

\medskip

In the case of countable-state RDS, for any $\omega\in\Omega$ and any $\boldsymbol v\in\ell^1,$
define
\begin{eqnarray}\label{LE-for-countable}
\lambda(\omega,\boldsymbol v)=\limsup\limits_{n\rightarrow+\infty}\dfrac{1}{n}\log\|\cM(n,\omega)\boldsymbol{v}\|_{1},
\end{eqnarray}
which is referred to as the {\em Lyapunov exponent} of $\cM$
associated with $\omega, \boldsymbol v,$ whenever the limit exists.

\begin{Rmk}\label{countable} 

(i) Unlike  Theorem \ref{MET}, the conclusion of Proposition \ref{MET for finite dsdt-RDS} holds for all $\omega\in\Omega$ instead of a full $\mu$-measure set.

(ii)  There have been works in infinite-dimensional multiplicative ergodic theorem \cite{Blu, FGQ, GQ, LianLu}. However, all these works are for cases of countably many Lyapunov exponents.
In our case, the induced linear cocycle of a countable-state RDS can  admit  uncountably many  Lyapunov exponents. As an example, let
$$\mathcal A(n,\omega)=f^n, \;\; n\in\NN,\, \omega\in\Omega
$$
where $f: \cS=\{s_i, i\in\NN\}\to \cS$ is a deterministic map such
that $fs_i=s_{i-1}$ for all $i\geq2$ and $fs_1=s_1.$ For any  given
$\lambda\in(0,1),$ choose
\[
 \boldsymbol v=\left(\dfrac{\lambda}{1-\lambda}, -\lambda, -\lambda^2, \cdots, -\lambda^n,\cdots\right)\in\ell^1.
\]
Then for the induced linear cocycle $\cM,$
\begin{eqnarray*}
\lambda(\omega, \boldsymbol v)&=&
\limsup\limits_{n\rightarrow+\infty}\dfrac{1}{n}\log\|\cM(n,\omega)\boldsymbol{v}\|_{1}\\
&=&\lim\limits_{n\rightarrow+\infty}\dfrac{1}{n}\log\left(\dfrac{\lambda}{1-\lambda}-\dfrac{\lambda(1-\lambda^n)}{1-\lambda}+
\dfrac{\lambda^{n+1}}{1-\lambda}\right)
\\
&=&\lim\limits_{n\to+\infty}\dfrac{1}{n}\log\left(\dfrac{2\lambda^{n+1}}{1-\lambda}\right)
\\
&=&\log\lambda.\\
\end{eqnarray*}
Thus,  for this example,  any value in $(-\infty, 0)$ is a Lyapunov
exponent of $\cM$.

\end{Rmk}

\section{Synchronization in finite-state RDS}\label{sec:synchronization for finite RDS}

In this section, we study synchronization phenomenon for a
finite-state RDS $\cA$ with state set $\cS=\{s_i:\ i=1,2,\cdots,
k\}$. Recall that the induced linear cocycle $\cM$ over
$(\Omega,\cF,\mu,\theta)$ acts on the state space $\RR^k$ equipped
with the standard Euclidean norm.

\subsection{A necessary and sufficient condition for
synchronization}  By Proposition~\ref{MET for finite dsdt-RDS}, for
any $\omega\in\Omega,$ $\cM$ admits at most two Lyapunov exponents:
$\lambda_1(\omega)=0, \lambda_2(\omega)=-\infty$.
Let \begin{equation}\label{V}
V(\omega)=\{\boldsymbol{v}\in\mathbb R^k:
\lambda(\omega,\boldsymbol{v})=-\infty\}. 
\end{equation}
It is easy to see that $V(\omega)$ is a linear subspace  of $\mathbb
R^k$.  The following result says  that $V(\omega)$ is actually
contained in a co-dimension-$1$ hyperplane $E_0.$

\medskip

\begin{Lem}\label{E0} For any $\omega\in\Omega,$
\begin{equation}\label{E000}
V(\omega)\subseteq E_0=:\{{\boldsymbol v}={(v_1,\cdots,v_k)^\top}
:\sum\limits_{i=1}^kv_i=0\}.
\end{equation}
\end{Lem}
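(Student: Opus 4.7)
\medskip

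The plan is to exploit the fact, visible in the definition \eqref{M}, that every column of the matrix $\cM(n,\omega)$ contains exactly one entry equal to $1$ and the rest are $0$. This is because $\cM_{s_is_j}(n,\omega)=1$ precisely when $s_i=\cA(n,\omega)s_j$, and for each $j$ there is a unique such $i$. Consequently, each column sum of $\cM(n,\omega)$ equals $1$, i.e., $\cM(n,\omega)$ is column-stochastic with $\{0,1\}$ entries. I would record this structural observation first.

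Next, I would use this column-sum property to show that $\cM(n,\omega)$ preserves the sum of coordinates: for any $\boldsymbol{v}=(v_1,\dots,v_k)^\top\in\RR^k$, writing $\boldsymbol{u}=\cM(n,\omega)\boldsymbol{v}$, one has
\begin{equation*}
\sum_{i=1}^k u_i \;=\; \sum_{i=1}^k\sum_{j:\cA(n,\omega)s_j=s_i} v_j \;=\; \sum_{j=1}^k v_j,
\end{equation*}
since each index $j\in\{1,\dots,k\}$ contributes to exactly one inner sum (the one indexed by $i$ such that $s_i=\cA(n,\omega)s_j$). Thus $\sum_{i=1}^k(\cM(n,\omega)\boldsymbol v)_i=\sum_{i=1}^k v_i$ for every $n\in\NN$ and every $\omega\in\Omega$.

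Now take any $\boldsymbol v\in V(\omega)$. By the dichotomy established in the proof of Proposition~\ref{MET for finite dsdt-RDS}, $\lambda(\omega,\boldsymbol v)=-\infty$ forces the existence of some $n\in\NN$ with $\cM(n,\omega)\boldsymbol v=\boldsymbol 0$ (otherwise $\|\cM(n,\omega)\boldsymbol v\|$ stays uniformly bounded below by a positive constant and the Lyapunov exponent would equal $0$). Applying the sum-preservation identity at this $n$ gives $\sum_i v_i=\sum_i 0 =0$, so $\boldsymbol v\in E_0$, proving \eqref{E000}. I do not anticipate a serious obstacle here: the entire argument is a one-line consequence of the combinatorial structure of $\cM(n,\omega)$ together with the zero-limit alternative already isolated in Proposition~\ref{MET for finite dsdt-RDS}. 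The only thing worth being careful about is invoking the correct alternative from that proposition rather than reproving it.
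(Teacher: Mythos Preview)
Your proof is correct and follows essentially the same approach as the paper's: both use that $\boldsymbol v\in V(\omega)$ implies $\cM(n,\omega)\boldsymbol v=\boldsymbol 0$ for some $n$, together with the coordinate-sum preservation $\sum_i u_i=\sum_i v_i$ for $\boldsymbol u=\cM(n,\omega)\boldsymbol v$. The only difference is that you spell out in detail why the sum is preserved (via the column-stochastic structure of $\cM(n,\omega)$), whereas the paper simply asserts this identity.
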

\begin{proof} Let ${\boldmath v}={(v_1,\cdots,v_k)^\top} \in V(\omega)$.
Then  there exists $n\in\mathbb N$ such that $\mathcal
M(n,\omega)\boldsymbol{v}=\boldsymbol 0$. If we denote $\mathcal
M(n,\omega)${\boldmath$v$}={\boldmath$u$}$=(u_1,...,u_k)^\top$, then
$\sum\limits_{i=1}^ku_i=\sum\limits_{i=1}^kv_i$. Hence ${\boldmath
v}\in E_0$.
\end{proof}

Note that for any $\boldsymbol v\in \mathbb R^k\backslash
V(\omega),$ we have $\lambda(\omega,\boldsymbol v)=0.$ Denote
$m_2(\omega)=\dim V(\omega)$ and
\begin{eqnarray}\label{m_1}
m_1(\omega)=\dim\mathbb R^k-\dim V(\omega)=k-m_2(\omega).
\end{eqnarray}
As in Theorem \ref{MET}, we call $m_1(\omega), m_2(\omega)$
multiplicities of $\lambda_1(\omega),\lambda_2(\omega)$
respectively. From Proposition~\ref{MET for finite dsdt-RDS}, we
know that
\begin{eqnarray}\label{m_1 and m_2}
m_1(\omega)\geq1,\ \  m_2(\omega)\leq k-1.
\end{eqnarray}

\medskip

The following result, from which Theorem A~(iii) follows, shows that
the synchronization of $\cA$ happens exactly when the equality in
\eqref{m_1 and m_2} holds true.


\begin{Thm}\label{characterization for synchronization}
 $\mathcal A$ synchronizes if and only if for $\mu$-a.e. $\omega\in\Omega,$ $\cM$ admits precisely two Lyapunov exponents
$\lambda_1(\omega)=0$, $\lambda_2(\omega)=-\infty$ with respective multiplicities
\begin{equation}\label{m1}
m_1(\omega)=1,\ \  m_2(\omega)=k-1.
\end{equation}
\end{Thm}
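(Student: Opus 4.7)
The plan is to translate synchronization into a statement about the subspace $V(\omega)$ via the action of $\cM$ on difference vectors $e_i - e_j$, where $\{e_1,\dots,e_k\}$ is the standard basis of $\RR^k$. The payoff is that full synchronization is precisely the statement that $V(\omega)$ contains all such differences, which spans the codimension-$1$ hyperplane $E_0$; the codimension-$1$ upper bound from Lemma \ref{E0} then pins down the multiplicities.

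First I would observe that by Proposition \ref{MET for finite dsdt-RDS}, for every $\omega$ we have
$$V(\omega)=\{\boldsymbol v\in\RR^k:\exists\, n\in\NN\text{ such that } \cM(n,\omega)\boldsymbol v=\boldsymbol 0\},$$
because the Lyapunov exponent $\lambda(\omega,\boldsymbol v)$ is either $0$ or $-\infty$, and the latter occurs exactly when some iterate sends $\boldsymbol v$ to zero. Next, the key identity from \eqref{M} is that
$$\cM(n,\omega)(e_i-e_j)=e_{\cA(n,\omega)s_i}-e_{\cA(n,\omega)s_j},$$
which vanishes if and only if $\cA(n,\omega)s_i=\cA(n,\omega)s_j$. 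Moreover, by the cocycle property, once $\cM(n,\omega)(e_i-e_j)=\boldsymbol 0$ for some $n$, the same holds for all $m\geq n$. Combining these two facts yields the pivotal equivalence: $\{s_i,s_j\}$ $\omega$-synchronizes if and only if $e_i-e_j\in V(\omega)$.

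For the forward implication, suppose $\cA$ synchronizes. Then for $\mu$-a.e.\ $\omega$, the equivalence above gives $e_i-e_j\in V(\omega)$ for every pair $i\neq j$. A standard linear-algebra check shows $\{e_i-e_j:1\le i<j\le k\}$ spans $E_0$, so $E_0\subseteq V(\omega)$. Together with Lemma \ref{E0}, this forces $V(\omega)=E_0$, hence $m_2(\omega)=\dim V(\omega)=k-1$ and $m_1(\omega)=1$, as required. For the converse, if \eqref{m1} holds on a full-measure set, then Lemma \ref{E0} and the dimension count give $V(\omega)=E_0$ on that set; in particular each $e_i-e_j$ lies in $V(\omega)$, so every pair $\{s_i,s_j\}$ $\omega$-synchronizes by the equivalence above, which is exactly synchronization of $\cA$.

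There is no genuine obstacle; the work consists in making the translation between the dynamical notion (pairs of trajectories collapsing) and the linear-algebraic notion (difference vectors lying in $V(\omega)$) careful enough to use, at the right moment, both Lemma \ref{E0} (upper bound $V(\omega)\subseteq E_0$) and the elementary fact that differences of basis vectors span $E_0$. The only mildly delicate point is to note that $\omega$-synchronization requires the equality $\cA(n,\omega)s=\cA(n,\omega)s'$ for \emph{all} sufficiently large $n$, not merely for some $n$; but this is automatic from the cocycle property applied to $\cM(n,\omega)(e_i-e_j)$, so the equivalence with membership in $V(\omega)$ is clean.
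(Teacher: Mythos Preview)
Your proposal is correct and follows essentially the same route as the paper: both directions reduce to showing $V(\omega)=E_0$, using Lemma~\ref{E0} for one containment and the synchronization/difference-vector correspondence for the other. The only cosmetic difference is in the forward direction: the paper observes that once all states have collapsed, $\cM(m,\omega)$ has a single row of $1$'s and hence annihilates every $\boldsymbol v\in E_0$ directly, whereas you argue pairwise via $e_i-e_j\in V(\omega)$ and then use that these differences span $E_0$; the content is the same.
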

\begin{proof}
Suppose $\mathcal A$ synchronizes. Then for $\mu$-a.e.
$\omega\in\Omega,$ there are integers  $\ n(\omega)\in\mathbb N$ such
that for every $ m\geq n_(\omega),$ we can find an integer $1\le
\ell(\omega,m)\le k$ such that
$$ \mathcal A(m,\omega)s_i=s_{\ell(\omega,m)}, \,\; i=1,\cdots,k.$$
For given $\omega$ and $ m\geq n(\omega),$ it follows from \eqref{M}
that the matrix $\mathcal M(m,\omega)$ has every entries on the
$\ell(\omega,m)$-th row being $1$ and  all other being $0$. Let
$\boldsymbol v\in E_0,$ where $E_0$ is the co-dimension-$1$
hyperplane defined in \eqref{E000}. We then have $\mathcal
M(m,\omega) {\boldmath v}=\textbf0$, i.e., ${\boldmath v}\in
V(\omega)$, where $V(\omega)$ is the spectral subspace defined in
\eqref{V}. Hence $E_0\subseteq V(\omega)$, and by Lemma~\ref{E0}, we
actually have
\begin{eqnarray}\label{-infty=E_0}
V(\omega)=E_0.
\end{eqnarray}
It follows that $m_2(\omega)={\rm dim} \ V(\omega)={\rm dim} \
E_0=k-1$ and $\lambda_2=-\infty$ is attained.  By
Proposition~\ref{MET for finite dsdt-RDS},  the Lyapunov exponent
$\lambda_1=0$ is always attained with $m_1(\omega)=k-m_2(\omega)=1.$

Now suppose \eqref{m1} holds. Then ${\rm dim} \ V(\omega)=k-1$,
$\mu$-a.e. $\omega\in\Omega$. It follows from  Lemma~\ref{E0} that
\eqref{-infty=E_0} holds for $\mu$-a.e. $\omega\in\Omega.$  Let $s_i,\ s_j$ be any two distinct
elements of $\cS$ and denote by $e_i$, respectively $ e_j$, the
$i$-th, respectively the $j$-th, standard unit vector in $\RR^k$.
Since $e_i-e_j\in E_0, $ we have by \eqref{-infty=E_0} that
$e_i-e_j\in V(\omega)$ for $\mu$-a.e. $\omega\in\Omega$. It follows
from \eqref{V} that, for a fixed such $\omega$, there exists
$n(\omega)$ sufficiently large such that for all $n\ge n(\omega)$,
$$\mathcal M(n,\omega)(e_i-e_j)=\textbf0,  $$ i.e., $$\mathcal M(n,\omega)e_i=\mathcal M(n,\omega)e_j,$$ or equivalently, $$\mathcal
A(n,\omega)s_{i}=\mathcal A(n,\omega)s_{j}.$$ This show that any
pair of elements in $\cS$ synchronizes, hence $\mathcal A$
synchronizes.
\end{proof}


\subsection{Synchronization along synchronized subsets}\label{sec:syn-subset} It can be seen from the
definition of  synchronization that if the cocycle $\cA$ is
invertible, then none pair of elements in $\cS$ can  synchronize.
Thus, for a dtds-RDS, total non-synchronization  and total
synchronization are two extreme situations, and  partial
synchronization is to be expected in general. Below, we give a
characterization of the mechanism of partial synchronization for a
finite-state RDS $\cA$.

We call $\xi=\{W_1,\cdots, W_p\}$  a \emph{partition} of $\mathcal S$ if
each $W_i$, referred to as a {\em component of $\xi$}, is a
subset of $\mathcal S$, $W_i\cap W_j=\emptyset$ for
all $ i\neq j$, and $\bigcup\limits_{i=1}^p W_i=\mathcal S$.
 Let $\mathcal P_\mathcal S$ be the set of all partitions of
$\mathcal S.$ For $\xi,\eta\in \mathcal P_\mathcal S$, we say
\begin{eqnarray*}
\xi\preccurlyeq\eta\ \Longleftrightarrow\ \text{each component of
$\xi$ is the union of some components of $\eta$}.
\end{eqnarray*}
It is clear that the binary relation ``$\preccurlyeq$" defines a
partial ordering on $\mathcal P_\mathcal S$. A partition family
$\mathcal Q\subseteq\mathcal P_\mathcal S$ is called a \emph{chain}
if for any $\xi,\eta\in\mathcal Q,$ either $\xi\preccurlyeq\eta $ or
$\eta\preccurlyeq\xi.$ We call $\eta\in\mathcal Q$ a \emph{minimal
partition} of $\mathcal Q$ if $\xi\preccurlyeq\eta$ and
$\xi\in\mathcal Q$ imply that $\xi=\eta.$ It is a well-known fact
that any finite chain admits a unique minimal partition.

For the sake of analyzing partial synchronization occurred in the
dtds-RDS, we would like to consider partitions of $\cS$ that are
connected to the random cocycle $\cA$. For any $\omega\in\Omega,$ it is easy to see that
$\xi_n(\omega)=:\{\cA^{-1}(n,\omega)s_i:i=1,...,k\}$ is a partition
of $\cS$ for each $n\in\NN$.
\medskip

\begin{Lem}\label{xi_n} For each $\omega\in\Omega,$ $\{\xi_n(\omega):\ n\in\NN\}$ form
a nonincreasing chain of $\mathcal P_\mathcal S$, i.e.,
$$\xi_0(\omega)\succcurlyeq\xi_1(\omega)\succcurlyeq\cdots\succcurlyeq\xi_n(\omega)\succcurlyeq\cdots,$$
which is in fact a finite chain.
\end{Lem}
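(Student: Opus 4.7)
The plan is to deduce monotonicity of the chain from the cocycle identity for $\cA$, and then conclude finiteness by a simple count of nonempty components.

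For the monotonicity step, I would fix $\omega\in\Omega$ and $n\in\NN$ and invoke the cocycle property of $\cA$ (recalled in Section~2.1), which yields $\cA(n+1,\omega)=\cA(1,\theta^n\omega)\circ\cA(n,\omega)$. Consequently, whenever $s,s'\in\cS$ satisfy $\cA(n,\omega)s=\cA(n,\omega)s'$, applying $\cA(1,\theta^n\omega)$ to both sides immediately gives $\cA(n+1,\omega)s=\cA(n+1,\omega)s'$. Translated into partitions, every nonempty preimage $\cA^{-1}(n,\omega)s_i$ is contained in exactly one preimage $\cA^{-1}(n+1,\omega)s_j$, so each component of $\xi_{n+1}(\omega)$ is a union of one or more components of $\xi_n(\omega)$. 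By the definition of $\preccurlyeq$, this is precisely $\xi_{n+1}(\omega)\preccurlyeq\xi_n(\omega)$, i.e.\ $\xi_n(\omega)\succcurlyeq\xi_{n+1}(\omega)$.

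For the finiteness claim, I would let $N_n$ denote the number of nonempty components of $\xi_n(\omega)$. The inequality $N_{n+1}\le N_n$ is immediate from the monotonicity just proved, and I would argue that $N_{n+1}=N_n$ forces $\xi_{n+1}(\omega)=\xi_n(\omega)$: if the two partitions were distinct, then some component of $\xi_{n+1}(\omega)$ would have to be the union of at least two distinct components of $\xi_n(\omega)$, strictly reducing the count. Since $N_0=k$ and $N_n\ge 1$ for all $n$, the sequence $\{N_n\}$ is a nonincreasing sequence of positive integers confined to $\{1,\dots,k\}$, so it can strictly decrease at most $k-1$ times. Therefore $\{\xi_n(\omega):n\in\NN\}$ contains at most $k$ distinct partitions, and the chain is finite.

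I do not anticipate any serious obstacle: the monotonicity is essentially a one-line consequence of the cocycle identity, and the remainder is a combinatorial pigeonhole on component counts. The only point worth stating with care is the claim that within a chain of coarsenings the equality $N_{n+1}=N_n$ already implies $\xi_{n+1}(\omega)=\xi_n(\omega)$, which is immediate from the fact that a strict coarsening of a partition must merge at least two components.
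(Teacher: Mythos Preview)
Your proof is correct and follows essentially the same route as the paper: both derive $\xi_n(\omega)\succcurlyeq\xi_{n+1}(\omega)$ directly from the cocycle identity $\cA(n+1,\omega)=\cA(1,\theta^n\omega)\circ\cA(n,\omega)$, and both conclude finiteness from the eventual stabilization of the component count. Your finiteness argument is in fact slightly more explicit than the paper's, which simply notes that the number of components is eventually constant without spelling out why equal counts in a chain force equal partitions.
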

\begin{proof} For fixed  $n\in\NN$, let $W$ be a component of $\xi_{n+1}(\omega)$ and denote
$$
\cS_n=\cA^{-1} (1,\theta^n\omega)(\cA(n+1,\omega)W).
$$  It follows
from the the cocycle property of $\cA$ that
\[
W=\cA^{-1}(n,\omega)\left( \cA^{-1}
(1,\theta^n\omega)(\cA(n+1,\omega)W)\right)=\bigcup_{s\in
\cS_n}\cA^{-1}(n,\omega)s.
\]
Since $W$ is arbitrary and each $\cA^{-1}(n,\omega)s$,
$s\in\cS_n$, is a component of $\xi_n(\omega)$, we have
$\xi_{n}(\omega)\succcurlyeq\xi_{n+1}(\omega)$.

Since the chain $\{\xi_n(\omega):\ n\in\NN\}$ is nonincreasing and
$\cS$ is finite, their number of components is a constant for $n$
sufficiently large. Hence $\{\xi_n(\omega):\ n\in\NN\}$ is a finite
chain.
\end{proof}

Given $\omega\in\Omega,$ let $\eta(\omega)$ be the minimal partition of the chain $\{\xi_n(\omega):\
n\in\NN\},$ which we refer to as the {\em $\omega$-synchronized partition}. Components of $\eta(\omega)$ are called
\emph{$\omega$-synchronized subsets} of $\mathcal A.$

\begin{Prop}\label{syn space} Consider the finite-state RDS $\cA$.
Then for any $\omega\in\Omega,$ a pair $\{s_i,s_j\}$ in $\cS$
	$\omega$-synchronizes if and only if  $s_i,s_j$ lie in a same
	$\omega$-synchronized subset of $\cA.$ 
\end{Prop}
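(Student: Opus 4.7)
The plan is to exploit the fact, implicit in Lemma~\ref{xi_n}, that the nonincreasing chain $\{\xi_n(\omega):n\in\NN\}$ stabilizes after finitely many steps, so that the minimal partition $\eta(\omega)$ coincides with $\xi_n(\omega)$ for all sufficiently large $n$. Once this stabilization index is identified, both directions of the equivalence will follow immediately from unwinding the definition of $\xi_n(\omega)$.

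\medskip

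\textbf{Step 1 (Stabilization).} Fix $\omega\in\Omega$. By Lemma~\ref{xi_n}, the chain $\{\xi_n(\omega):n\in\NN\}$ is finite and nonincreasing under $\preccurlyeq$. Hence there exists $N=N(\omega)\in\NN$ such that
\[
\xi_n(\omega)=\xi_N(\omega) \quad\text{for every } n\ge N.
\]
Moreover, since $\eta(\omega)$ is defined to be the minimal element of this chain and the sequence is nonincreasing, I would verify that $\eta(\omega)=\xi_N(\omega)$: any component of $\xi_N(\omega)$ cannot be further merged with another component without leaving the chain, because $\xi_n(\omega)=\xi_N(\omega)$ for all $n\ge N$ and $\xi_n(\omega)\succcurlyeq\xi_N(\omega)$ for $n<N$.

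\medskip

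\textbf{Step 2 (Sufficiency).} Suppose $s_i,s_j$ lie in the same component $W$ of $\eta(\omega)=\xi_N(\omega)$. By definition of $\xi_N(\omega)$, there is some $s\in\cS$ with $\cA(N,\omega)s_i=s=\cA(N,\omega)s_j$. For any $n\ge N$, Step~1 gives $\xi_n(\omega)=\xi_N(\omega)$, so $s_i,s_j$ again lie in one preimage set $\cA^{-1}(n,\omega)s'$ and thus $\cA(n,\omega)s_i=\cA(n,\omega)s_j$. Therefore $\{s_i,s_j\}$ $\omega$-synchronizes with threshold $n(\omega)=N$.

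\medskip

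\textbf{Step 3 (Necessity).} Conversely, suppose $\{s_i,s_j\}$ $\omega$-synchronizes, so that there exists $n(\omega)\in\NN$ with $\cA(n,\omega)s_i=\cA(n,\omega)s_j$ for every $n\ge n(\omega)$. For any such $n$, this equality forces $s_i$ and $s_j$ to belong to a common element $\cA^{-1}(n,\omega)s$ of $\xi_n(\omega)$. Choosing $n\ge\max\{N,n(\omega)\}$ and invoking Step~1, $s_i$ and $s_j$ lie in a common component of $\xi_N(\omega)=\eta(\omega)$, i.e., in a common $\omega$-synchronized subset.

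\medskip

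The only nontrivial point is the identification $\eta(\omega)=\xi_N(\omega)$ in Step~1, but this is a routine consequence of the chain being nonincreasing and finite together with the definition of minimality. Once this is in place, the remainder of the argument is a direct translation between the equation $\cA(n,\omega)s_i=\cA(n,\omega)s_j$ and membership in a common block of $\xi_n(\omega)$.
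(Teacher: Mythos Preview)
Your proof is correct and follows essentially the same approach as the paper: both arguments rest on the stabilization of the chain $\{\xi_n(\omega)\}$ at some finite index $N$ (so that $\eta(\omega)=\xi_N(\omega)$) and then translate membership in a common block of $\xi_n(\omega)$ into the equation $\cA(n,\omega)s_i=\cA(n,\omega)s_j$. The paper's version is simply terser, leaving the stabilization step implicit and declaring the converse ``clear,'' whereas you spell out both directions explicitly.
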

\begin{proof} By definition of synchronization, if $\{s_i,s_j\}$ $\omega$-synchronizes, then
	$\mathcal A(n,\omega)s_i=\mathcal A(n,\omega)s_j$ for $n$
	sufficiently large. It follows that $s_i$ and $s_j$ lie in a same
	component of $\xi_n(\omega)$ for $n$ sufficiently large.
	Consequently,  $s_i$ and $s_j$ lie in a same $\omega$-synchronized
	subset of $\cA.$ The converse is also clear.
\end{proof}

\begin{Rmk}
(i) Actually, by {\em Proposition \ref{syn space}}, we can define the partition $\eta(\omega)$ in a
more straightforward way through the equivalence relation that $s_i,
s_j\in\mathcal S$ lie in the same component of $\eta(\omega)$ if and
only if $\{s_i,s_j\}$ $\omega$-synchronizes. We derive it through a
chain of decreasing partitions as in {\rm Lemma~\ref{xi_n}} to give
a more intuitive sense of how partial synchronization happens in a
finite-state RDS.

(ii) We note that the concept of  $\omega$-synchronization for a dtds-RDS $\cA$ is defined in a pairwise way. 
In fact, for the finite-state case, this concept is global, i.e., for a given $\omega\in\Omega,$ if $W(\omega)$ is a $\omega$-synchronized subset, then there exists $n(\omega)\in\NN$  such that $\cA(n,\omega)W(\omega)$ is a single state for all $n\ge n(\omega).$ 


\end{Rmk}

The following proposition relates the cardinality of the $\omega$-synchronized partition $\eta(\omega)$ and the multiplicity of Lyapunov exponent $\lambda_1(\omega)=0.$

\begin{Prop}\label{M1}For each $\omega\in\Omega,$
\begin{eqnarray}\label{number of eta}
\#\eta(\omega)=m_1(\omega),
\end{eqnarray}
where $m_1(\omega)$ is the multiplicity of the Lyapunov exponent
$\lambda_1(\omega)=0.$
\end{Prop}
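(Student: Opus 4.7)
The plan is to compute $\dim V(\omega)$ directly in terms of the synchronized partition $\eta(\omega)=\{W_1,\dots,W_p\}$, where $p=\#\eta(\omega)$, and then invoke the definition \eqref{m_1} to get $m_1(\omega)=k-\dim V(\omega)=p$.

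First I would observe that the chain $\{\xi_n(\omega)\}$ is finite (Lemma~\ref{xi_n}), so there exists $N=N(\omega)$ such that $\xi_n(\omega)=\eta(\omega)$ for every $n\geq N$. Equivalently, for every $n\geq N$, the map $\cA(n,\omega)$ is constant on each component $W_j$, say $\cA(n,\omega)(W_j)=\{t_j\}$, and because distinct components of $\xi_n(\omega)$ are fibers over distinct states, the points $t_1,\dots,t_p$ are pairwise distinct.

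Next I would identify $V(\omega)$ with the kernel $\ker\cM(N,\omega)$. Using the cocycle property, whenever $\cM(n_0,\omega)\boldsymbol{v}=\boldsymbol{0}$ one has $\cM(n,\omega)\boldsymbol{v}=\boldsymbol{0}$ for all $n\geq n_0$; since there are only finitely many matrices $\cM(n,\omega)$ as $n$ varies, $V(\omega)=\ker\cM(N,\omega)$ for any sufficiently large $N$. Combined with the previous step, a direct inspection of the formula \eqref{M} shows that for such $N$ the $t_j$-th coordinate of $\cM(N,\omega)\boldsymbol{v}$ equals $\sum_{i\colon s_i\in W_j}v_i$ and all other coordinates vanish, so
\begin{equation*}
V(\omega)=\Bigl\{\boldsymbol{v}=(v_1,\dots,v_k)^{\top}\in\RR^k:\ \sum_{i:\, s_i\in W_j}v_i=0,\ \ j=1,\dots,p\Bigr\}.
\end{equation*}

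To finish, I would note that the $p$ linear functionals $\boldsymbol{v}\mapsto\sum_{i:\,s_i\in W_j}v_i$ are dual to the indicator vectors $\mathbf{1}_{W_j}$, which are linearly independent because the $W_j$'s are pairwise disjoint and nonempty. Therefore $V(\omega)$ is cut out by $p$ independent linear equations and $\dim V(\omega)=k-p$. By the definition \eqref{m_1}, $m_1(\omega)=k-\dim V(\omega)=p=\#\eta(\omega)$, which is \eqref{number of eta}. The only slightly delicate point is the verification that the synchronized images $t_j$ are distinct, which rules out accidental cancellations between contributions from different $W_j$'s when computing $\ker\cM(N,\omega)$; this follows transparently from the very definition of $\xi_N(\omega)$ as the partition by fibers of $\cA(N,\omega)$.
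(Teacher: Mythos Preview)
Your proof is correct and follows essentially the same approach as the paper: both identify $V(\omega)$ with $\ker\cM(N,\omega)$ for $N$ large and then compute its dimension via the fiber structure of $\cA(N,\omega)$. Your version is slightly more explicit in writing down $V(\omega)=\{\boldsymbol{v}:\sum_{i:s_i\in W_j}v_i=0,\ j=1,\dots,p\}$, whereas the paper simply counts nonzero rows of $\cM(N,\omega)$ to get its rank; but the content is the same.
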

\begin{proof} For each $\omega\in\Omega$, since for all $n$
sufficiently large, the number of components of  $\xi_n(\omega)$
 is a constant, it must equal to $\#\eta(\omega).$ For a such
$n$ sufficiently large,  we note that $\#\xi_n(\omega)$ equals to
$(k-m_2(\omega)),$ where $m_2(\omega)$ is the multiplicity of the
Lyapunov exponent $\lambda_2(\omega)=-\infty.$ In fact,  by
definition of Lyapunov exponents, a vector $\boldsymbol{v}\in\mathbb
R^k$ such that $\lambda_2(\omega,\boldsymbol{v})=-\infty$ if and
only if $\boldsymbol{v}$ satisfies a homogeneous system of linear
equations with coefficient matrix being $\cM(n,\omega).$ By \eqref{M},  the $i$-th row of $\cM(n,\omega)$ is non-zero if and only if there exists some $j$ such that $\cA(n,\omega)s_j=s_i,$ i.e., $\cA(n,\omega)^{-1}s_i$ is non-empty, which, by the construction of $\xi_n(\omega),$ is a component of $\xi_n(\omega).$
Thus, the number of non-zero rows equals $\#\xi_n(\omega).$  Note that for
each column of $\cM(n,\omega),$ there is only one entry being
non-zero, which means that  the number of non-zero rows is exactly
the rank of $\cM(n,\omega).$ Let $V(\omega)$ be as in ~\eqref{V}.
Then its dimension equals $(k-\#\xi_n(\omega)),$ i.e.,
$m_2(\omega)=k-\#\eta(\omega).$ \eqref{number of eta} now follows
from \eqref{m_1}.
\end{proof}

\begin{Rmk}\label{Rmk:finite}
We note that  {\em Theorem \ref{characterization for
synchronization}} is a special case of {\em
Propositions~\ref{syn space},~\ref{M1}} when  $\#\eta(\omega)=1$ for 
$\mu$-a.e. $\omega\in\Omega.$
\end{Rmk}

Theorem~A (ii) follows from Proposition \ref{syn space}
and \ref{M1}.



\section{synchronization in countable-state RDS}\label{sec:syn for countable state space}

In this section, we study the synchronization phenomenon for a
countable-state RDS $\cA$ with state set $\cS=\{s_i:\ i\in\NN\}$.
Recall that the induced linear cocycle $\cM$ over
$(\Omega,\cF,\mu,\theta)$ acts on the state space $\ell^1$ equipped
with the $\ell^1$-norm which we denote by $\|\cdot\|_1$.

\subsection{A necessary and sufficient condition for
synchronization}
 Let
\begin{equation}\label{E00}
        E_0=
        \left\{\boldsymbol{v}\in\ell^1:\sum\limits_{i=1}^{+\infty}v_i=0\right\},
\end{equation}
and \[
F_0=\Big\{\boldsymbol v\in E_0: \boldsymbol v\ \text{
has at most finitely many components being non-zero}\Big\}.
\]
We have the following basic facts.

\medskip

\begin{Lem}\label{basic facts} The following holds.
\begin{itemize}

\item[{\rm (1)}] $E_0$ is a closed subspace of $\ell^1.$

\item[{\rm(2)}] $\overline{F_0}=E_0.$
\end{itemize}
\end{Lem}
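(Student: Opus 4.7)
The plan is to handle the two statements separately. Both are elementary Banach-space facts about the kernel of the summation functional on $\ell^1$, so the outline will be short.

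For part (1), I would observe that $E_0$ is precisely the kernel of the linear functional $\phi:\ell^1\to\RR$ defined by $\phi(\boldsymbol v)=\sum_{i=1}^{+\infty}v_i$. The functional $\phi$ is bounded on $\ell^1$ since $|\phi(\boldsymbol v)|\leq \sum_{i=1}^{+\infty}|v_i|=\|\boldsymbol v\|_1$. Therefore its kernel $E_0$ is a closed linear subspace of $\ell^1$.

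For part (2), the inclusion $\overline{F_0}\subseteq E_0$ is immediate from (1) since $F_0\subseteq E_0$ and $E_0$ is closed. For the reverse inclusion, I would approximate an arbitrary $\boldsymbol v=(v_i)_{i\geq 1}\in E_0$ by finitely-supported vectors whose components still sum to zero. Specifically, for each $N\in\NN$, set $s_N=\sum_{i=1}^{N}v_i$ and define $\boldsymbol u^{(N)}\in\ell^1$ by
\begin{equation*}
u^{(N)}_i=\begin{cases}v_i,& 1\leq i\leq N,\\ -s_N,& i=N+1,\\ 0,& i>N+1.\end{cases}
\end{equation*}
By construction $\boldsymbol u^{(N)}$ has only finitely many non-zero components and $\sum_i u^{(N)}_i=0$, so $\boldsymbol u^{(N)}\in F_0$.

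The key computation is then
\begin{equation*}
\|\boldsymbol u^{(N)}-\boldsymbol v\|_1 = |s_N+v_{N+1}|+\sum_{i>N+1}|v_i|.
\end{equation*}
As $N\to+\infty$, the tail $\sum_{i>N+1}|v_i|$ tends to $0$ since $\boldsymbol v\in\ell^1$, the coordinate $v_{N+1}$ tends to $0$, and $s_N\to 0$ by the hypothesis $\boldsymbol v\in E_0$. Hence $\boldsymbol u^{(N)}\to\boldsymbol v$ in $\ell^1$, which yields $\boldsymbol v\in\overline{F_0}$ and completes the argument. Neither step presents any real obstacle; the only thing to be slightly careful about is the corrective coordinate at position $N+1$, which is introduced precisely to keep the approximant in $F_0$ rather than merely in the finitely-supported subspace of $\ell^1$.
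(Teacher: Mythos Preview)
Your proof is correct and follows essentially the same approach as the paper: for (2) your approximants $\boldsymbol u^{(N)}$ coincide (up to an index shift) with the paper's, since $-s_N=\sum_{i>N}v_i$ for $\boldsymbol v\in E_0$. For (1) you invoke the kernel-of-a-bounded-functional argument while the paper verifies closedness sequentially, but this is a cosmetic difference only.
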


\begin{proof}
(1) It is obvious that $E_0$ is a subspace of $\ell^1.$ To show the
closeness, we take any sequence $\{\boldsymbol{v_n}\}$ in $E_0$ such
that $\boldsymbol{v_n}=(v_i^{(n)})_{i=1}^{+\infty}\rightarrow\boldsymbol v=(v_i)_{i=1}^{+\infty}$.
Since
$$0=\lim\limits_{n\to+\infty}\sum\limits_{i=1}^{+\infty}v_i^{(n)}
=\sum\limits_{i=1}^{+\infty}\lim\limits_{n\to+\infty}v_i^{(n)}=\sum\limits_{i=1}^{+\infty}
v_i,$$ we have  $\boldsymbol v\in E_0.$

(2) For any $\boldsymbol{v}=(v_i)_{i=1}^{+\infty}\in E_0,$ let
$\boldsymbol u^{(n)}=(u^{(n)}_{i})_{i=1}^{+\infty}$ be such that
\begin{equation*}u^{(n)}_i=
\begin{cases}v_i,&1\leq i<n,
\\\sum\limits_{j=n}^{+\infty}v_j,&i=n,
\\0,&i>n.
\end{cases}
\end{equation*}
Then $\boldsymbol u^{(n)}\in F_0$ and it is easy to check that
$\|\boldsymbol u^{(n)}-\boldsymbol v\|_{1} \to0$ as
$n\rightarrow+\infty.$
\end{proof}

We directly obtain Theorem~B (ii) from the following result by the definition of synchronization for a countable-state RDS $\mathcal A$.
\medskip

\begin{Thm}\label{thm:countable-syn} Consider the countable-state
RDS $\cA$  and $\omega\in\Omega.$ Then any pair $\{s_i,s_j\}\subset\cS$ $\omega$-synchronizes if and only if
\begin{eqnarray}\label{sE0}
\overline{\{\boldsymbol{v}\in\ell^1:\lambda(\omega,\boldsymbol{v})=-\infty\}}=E_0,
\end{eqnarray}
where $\lambda(\omega,v)$ is  defined in \eqref{LE-for-countable}.
\end{Thm}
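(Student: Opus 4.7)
The plan is to prove both implications by analysing the subspace $S(\omega):=\{\boldsymbol v\in\ell^1:\lambda(\omega,\boldsymbol v)=-\infty\}$ and exploiting two structural features of $\cM$: the sum-preservation identity $\sum_i(\cM(n,\omega)\boldsymbol v)_i=\sum_j v_j$ (a valid rearrangement of an absolutely convergent series, since each index $j$ contributes to exactly one $i$), and the $\ell^1$-contractivity already recorded in \eqref{norm inequality}.

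For the ``only if'' direction, I would first verify that $S(\omega)$ is a vector subspace of $\ell^1$: if $\lambda(\omega,\boldsymbol v)=\lambda(\omega,\boldsymbol w)=-\infty$, then $\|\cM(n,\omega)(a\boldsymbol v+b\boldsymbol w)\|_1\le|a|\|\cM(n,\omega)\boldsymbol v\|_1+|b|\|\cM(n,\omega)\boldsymbol w\|_1$, and the $\limsup$ of $(1/n)$ times the log remains $-\infty$. Pair-synchronization of every $\{s_i,s_j\}$ gives $\cM(n,\omega)(e_i-e_j)=\boldsymbol 0$ for $n$ large, hence $e_i-e_j\in S(\omega)$ for all $i\ne j$. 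Since $F_0$ is the linear span of $\{e_i-e_j:i\ne j\}$, this forces $F_0\subseteq S(\omega)$, and Lemma \ref{basic facts}(2) yields $E_0=\overline{F_0}\subseteq\overline{S(\omega)}$. The reverse inclusion $\overline{S(\omega)}\subseteq E_0$ uses the sum-preservation identity: for any $\boldsymbol v\in S(\omega)$ one has $|\sum_j v_j|=|\sum_i(\cM(n,\omega)\boldsymbol v)_i|\le\|\cM(n,\omega)\boldsymbol v\|_1\to 0$, so $\boldsymbol v\in E_0$, and $E_0$ is closed by Lemma \ref{basic facts}(1).

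For the ``if'' direction, fix distinct $s_i,s_j\in\cS$. Since $e_i-e_j\in E_0=\overline{S(\omega)}$, pick $\boldsymbol v\in S(\omega)$ with $\|\boldsymbol v-(e_i-e_j)\|_1<\tfrac12$. From $\lambda(\omega,\boldsymbol v)=-\infty$ we have $\|\cM(n,\omega)\boldsymbol v\|_1\to 0$, so for $n$ sufficiently large the triangle inequality combined with \eqref{norm inequality} gives
\begin{equation*}
\|\cM(n,\omega)(e_i-e_j)\|_1\le\|\cM(n,\omega)\boldsymbol v\|_1+\|\boldsymbol v-(e_i-e_j)\|_1<\tfrac12+\tfrac12=1.
\end{equation*}
The key observation is that the $0/1$ structure of $\cM(n,\omega)$ forces $\cM(n,\omega)(e_i-e_j)$ to be either $\boldsymbol 0$ (when $\cA(n,\omega)s_i=\cA(n,\omega)s_j$) or of the form $e_\ell-e_{\ell'}$ with $\ell\ne\ell'$, the latter having $\ell^1$-norm exactly $2$. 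The strict bound $<1$ thus excludes the nonzero case and gives $\cA(n,\omega)s_i=\cA(n,\omega)s_j$ for all sufficiently large $n$, i.e., $\{s_i,s_j\}$ $\omega$-synchronizes.

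The main obstacle is the backward direction: since no Oseledec-type decomposition is available on $\ell^1$ (as emphasised in Remark \ref{countable}), closure-level information about $\overline{S(\omega)}$ must be promoted to a pointwise statement about the specific vectors $e_i-e_j$. This is achieved precisely by combining the $\ell^1$-contractivity of $\cM$ with the quantised dichotomy $\|\cM(n,\omega)(e_i-e_j)\|_1\in\{0,2\}$ coming from the $0/1$ structure of the matrices.
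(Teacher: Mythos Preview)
Your proof is correct and follows the same overall architecture as the paper's---density of $F_0$ in $E_0$, and approximation of $e_i-e_j$ by an element of $S(\omega)$---but you streamline two steps. For the inclusion $S(\omega)\subseteq E_0$, you invoke the sum-preservation identity $\sum_i(\cM(n,\omega)\boldsymbol v)_i=\sum_j v_j$ directly; the paper instead uses the synchronization hypothesis to produce a lower bound on $\|\cM(n,\omega)\boldsymbol v\|_1$ when $\sum_j v_j\ne 0$, which is more work and, as your argument shows, unnecessary (the inclusion $S(\omega)\subseteq E_0$ in fact holds for \emph{every} $\omega$, synchronized or not). For the ``if'' direction, the paper argues by contradiction, picking $\boldsymbol v\in S(\omega)$ within $\tfrac18$ of $e_{i_0}-e_{j_0}$ and deriving the uniform lower bound $\|\cM(n,\omega)\boldsymbol v\|_1>\tfrac{13}{8}$ from the standing assumption $\cA(n,\omega)s_{i_0}\ne\cA(n,\omega)s_{j_0}$; you instead bound $\|\cM(n,\omega)(e_i-e_j)\|_1$ from above via contractivity and then invoke the dichotomy $\|\cM(n,\omega)(e_i-e_j)\|_1\in\{0,2\}$. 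The two arguments are essentially dual, but your packaging via the quantised norm makes the role of the $0/1$ matrix structure more transparent.
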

\begin{proof} Suppose  \eqref{sE0} holds but
there exist $ s_{i_0}, s_{j_0}\in\cS$ with $s_{i_0}\neq
s_{j_0}$ satisfying
\begin{equation}\label{An}\mathcal A(n,\omega)s_{i_0}\neq\mathcal
A(n,\omega)s_{j_0},\;\; \, \forall n\geq0.
\end{equation}

Since $e_{i_0}-e_{j_0}\in E_0$
and \eqref{sE0} holds, there exists
$\boldsymbol{v}=(v_i)_{i=1}^{+\infty}\in\ell^1$ such that
\begin{eqnarray}
& &\|\boldsymbol v-(e_{i_0}-e_{j_0})\|_{1}<\dfrac{1}{8},\label{distance}\\
& &\lambda(\omega, \boldsymbol
v)=\lim\limits_{n\to+\infty}\dfrac{1}{n}\log\|\mathcal
M(n,\omega)\boldsymbol{v}\|_{1}=-\infty. \label{LE}
\end{eqnarray}
We note by (\ref{distance}) that
$$v_{i_0}\in(\dfrac{7}{8},\dfrac{9}{8}),\ v_{j_0}\in(-\dfrac{9}{8},-\dfrac{7}{8}),\ \sum\limits_{i\neq i_0,j_0}|v_i|<\dfrac{1}{8}.$$
It then follows from \eqref{An} that  $$\|\mathcal
M(n,\omega)\boldsymbol
v\|_{1}=\sum\limits_{i=1}^{+\infty}|\sum\limits_{j:\mathcal
A(n,\omega)s_j=s_i}v_j|\geq|v_{i_0}|+|v_{j_0}|-\sum\limits_{i\neq
i_0,j_0}|v_i|>\dfrac{13}{8},$$ which leads to a contradiction  to
(\ref{LE}).

Conversely, suppose any pair $\{s_i,s_j\}\subset\cS$ $\omega$-synchronizes. We first show that
\begin{eqnarray}\label{contained 1}
\overline{\{\boldsymbol{v}\in\ell^1:\lambda(\omega,\boldsymbol{v})=-\infty\}}\subseteq
E_0.
\end{eqnarray}

If not, then there exists $\boldsymbol
v=(v_i)_{i=1}^{+ \infty}\in\ell^1\backslash E_0$ such that
$\lambda(\omega,\boldsymbol v)=-\infty.$ Since
$|\sum\limits_{i=1}^{+\infty}v_i|=:a>0$,  and any pair $\{s_i,s_j\}$ synchronizes, there
exists $m>0$ such that
\begin{eqnarray*}
\|\mathcal M(n,\omega)\boldsymbol
v\|_{1}\geq|\sum\limits_{i=1}^mv_i|-\sum\limits_{i=m+1}^{\infty}|v_{i+1}|>\dfrac{1}{2}a,
\end{eqnarray*}
for all $n$ sufficiently large, which is a contradiction to the fact
that $\lambda(\omega,\boldsymbol v)=-\infty.$ Thus
$$\{\boldsymbol v\in\ell^{1}: \lambda(\omega,\boldsymbol v)=-\infty\}\subseteq E_0.$$
(\ref{contained 1}) then follows from  Lemma \ref{basic facts} (1).

Next, we show that
\begin{eqnarray}\label{contained 2}
E_0\subseteq\overline{\{\boldsymbol{v}\in\ell^1:\lambda(\omega,\boldsymbol{v})=-\infty\}}.
\end{eqnarray}


Since any pair $(s_i,s_j)$ $\omega$-synchronizes, for any $\boldsymbol v\in
F_0,$ we have
$$\|\mathcal M(n,\omega)\boldsymbol v\|_{1}=|\sum\limits_{i: v_i\neq0}v_i|=0,$$
when $n$ sufficiently large. Thus $F_0\subseteq\{\boldsymbol{v}\in\ell^1:\lambda(\omega,\boldsymbol{v})=-\infty\}$ and
hence
$$\overline{F_0}\subseteq\overline{\{\boldsymbol{v}\in\ell^1:\lambda(\omega,\boldsymbol{v})=-\infty\}}.$$
Now \eqref{contained 2} follows
from Lemma~ \ref{basic facts} (2).

\end{proof}

\begin{Rmk}
For a finite-state RDS $\cA$,  it follows from the proof of {\rm
Theorem \ref{characterization for synchronization}} that if $\cA$
synchronizes, then $$E_0=\{\boldsymbol v\in\mathbb R^k:
\lambda(\omega,\boldsymbol v)=-\infty\},\;\, \mu-a.e.\
\omega\in\Omega,
$$  where $E_0$ is the co-dimension-$1$
hyperplane defined in \eqref{E000}. However, for a countable-state
RDS that synchronize, a similar identity is no longer true, i.e., it
is necessary to take closure in the identity \eqref{sE0}. To see
this, consider the example  in {\rm Remark~\ref{countable}}. It is
easy to see that the cocycle $\cA$ in this example synchronizes but
for any $\omega\in\Omega$,
\[
\{\boldsymbol v\in\ell^1: \lambda(\omega,\boldsymbol v)=-\infty\}
\]
 is not of co-dimension-$1$ because there are more than two other
 Lyapunov exponents. Hence it cannot equal to the hyperplane $E_0$ defined in \eqref{E00}.
\end{Rmk}
\subsection{Synchronization along synchronized  subsets}
In the case of countable-state RDS $\cA,$ for each
$\omega\in\Omega,$ the $\omega$-synchronized partition
$\eta(\omega):=\{W_i(\omega): i\in\mathcal I_\omega\}$ of $\cS$ can
be defined through the equivalence relation as mentioned in
Remak~\ref{Rmk:finite} (ii), i.e., a pair $(s_i, s_j)$ belongs to a
same component of $\eta(\omega)$ if and only if $(s_i,s_j)$
$\omega$-synchronizes. We still call each component of $\eta(\omega)$ as a {\em $\omega$-synchronized subset} of $\cA.$ 
Differing from the finite-state case, the number of synchronized subsets
$\mathcal I_\omega$ can be infinite. Also, when restricted to each $\omega$-synchronized subset $W(\omega),$ $\cA(n,\omega)W(\omega)$ need not be a single state for any finite $n.$

Now by restricting Theorem
\ref{thm:countable-syn} to a same $W_i(\omega)$ of $\eta(\omega),$
we have the following result which gives Theorem B~(i).

\medskip

\begin{Thm}\label{thm:countable-partial}
Let $\mathcal A$ be a countable-state RDS. Then for each $\omega\in\Omega,$
$$\overline{\{\boldsymbol{v}\in\ell^1:\lambda(\omega,\boldsymbol{v})=-\infty\}}=\{\boldsymbol{v}\in\ell^1:
\sum\limits_{j: s_j\in W_i(\omega)}v_j=0, \ \forall i\in\mathcal
I_\omega\},$$ where $\{W_i(\omega): i\in\mathcal I_\omega\}$ are the
synchronized subsets of $\mathcal A$ and $\lambda(\omega,v)$ is
defined in \eqref{LE-for-countable}.
\end{Thm}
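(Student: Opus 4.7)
The plan is to mirror the two-sided argument in the proof of Theorem~\ref{thm:countable-syn} but carried out one $\omega$-synchronized subset at a time. Write
\[
\tilde E_0(\omega):=\Bigl\{\boldsymbol v\in\ell^1:\sum_{j:\,s_j\in W_i(\omega)}v_j=0,\ \forall i\in\mathcal I_\omega\Bigr\},\qquad
\tilde F_0(\omega):=\{\boldsymbol v\in\tilde E_0(\omega):\boldsymbol v\ \text{has finite support}\}.
\]
First I would establish two preliminary facts modeled on Lemma~\ref{basic facts}. (1) $\tilde E_0(\omega)$ is a closed subspace of $\ell^1$: each linear functional $\boldsymbol v\mapsto\sum_{j:s_j\in W_i(\omega)}v_j$ is $\ell^1$-bounded, and their common zero set is closed. (2) $\overline{\tilde F_0(\omega)}=\tilde E_0(\omega)$, proved by truncation plus correction: for $\boldsymbol v\in\tilde E_0(\omega)$ and $N\in\NN$, keep only the entries $v_1,\ldots,v_N$ and, for each $i$ with $W_i(\omega)\not\subset\{s_1,\ldots,s_N\}$, add a correction at the smallest index $j_i^\star>N$ with $s_{j_i^\star}\in W_i(\omega)$ equal to $-\sum_{j\le N,\,s_j\in W_i(\omega)}v_j$; only finitely many corrections are added, each truncated component-sum is restored to $0$, and the $\ell^1$-error is bounded by $2\sum_{j>N}|v_j|\to 0$.

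Next I would prove the inclusion $\tilde E_0(\omega)\subseteq\overline{\{\boldsymbol v:\lambda(\omega,\boldsymbol v)=-\infty\}}$ by showing $\tilde F_0(\omega)\subseteq\{\boldsymbol v:\lambda(\omega,\boldsymbol v)=-\infty\}$ and invoking (1). Given $\boldsymbol v\in\tilde F_0(\omega)$ supported on $\{j_1,\ldots,j_p\}$, partition these indices by membership in the subsets $W_i(\omega)$; within each class, the (finitely many) states pairwise $\omega$-synchronize, and across classes they never collide. Hence there exists $n_0$ such that for every $n\ge n_0$, indices in the same class are mapped by $\cA(n,\omega)$ to a common state while indices of different classes go to distinct states, so
\[
\|\cM(n,\omega)\boldsymbol v\|_1=\sum_{k}\Bigl|\sum_{j\in C_k}v_j\Bigr|=0
\]
because each class-sum equals the full component-sum $\sum_{j:s_j\in W_{i_k}(\omega)}v_j=0$.

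The main obstacle is the opposite inclusion $\overline{\{\boldsymbol v:\lambda(\omega,\boldsymbol v)=-\infty\}}\subseteq\tilde E_0(\omega)$, for which, since $\tilde E_0(\omega)$ is closed, it suffices to show $\lambda(\omega,\boldsymbol v)=-\infty$ implies $\boldsymbol v\in\tilde E_0(\omega)$. Suppose toward a contradiction that $a:=\sum_{j:s_j\in W_{i_0}(\omega)}v_j\ne 0$, and choose $N$ so large that $\sum_{j>N}|v_j|<|a|/4$. Set $B:=\{j\le N:s_j\in W_{i_0}(\omega)\}$; being a finite set of pairwise $\omega$-synchronizing indices, there exists $n_0$ such that $\cA(n,\omega)$ collapses $\{s_j:j\in B\}$ to a single state $s_{i_n^\star}$ for all $n\ge n_0$. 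Because non-synchronizing pairs never collide, no index $j\le N$ outside $B$ can land on $s_{i_n^\star}$, so the $i_n^\star$-coordinate of $\cM(n,\omega)\boldsymbol v$ equals
\[
\sum_{j\in B}v_j\ +\ \sum_{\substack{j>N\\\cA(n,\omega)s_j=s_{i_n^\star}}}v_j,
\]
whose modulus is at least $\bigl||a|-|a|/4\bigr|-|a|/4=|a|/2>0$. Therefore $\|\cM(n,\omega)\boldsymbol v\|_1\ge|a|/2$ for every $n\ge n_0$, contradicting $\lambda(\omega,\boldsymbol v)=-\infty$.

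The delicate point is this last step: it requires simultaneously exploiting (i) eventual collapse of any \emph{finite} subset of a single $W_{i_0}(\omega)$, (ii) the fact that states from distinct $W_i(\omega)$'s never collide under $\cA(n,\omega)$, and (iii) uniform smallness of the $\ell^1$-tail, and packaging all three into a lower bound on a \emph{single} coordinate of $\cM(n,\omega)\boldsymbol v$. Everything else is a routine adaptation of Section~4.1.
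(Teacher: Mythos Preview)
Your proposal is correct and is precisely the detailed working-out of what the paper only sketches: the paper states Theorem~\ref{thm:countable-partial} with the one-line justification ``by restricting Theorem~\ref{thm:countable-syn} to a same $W_i(\omega)$ of $\eta(\omega)$'' and gives no further proof. Your argument carries out exactly this adaptation---replacing $E_0$ and $F_0$ by their blockwise analogues $\tilde E_0(\omega),\tilde F_0(\omega)$, redoing Lemma~\ref{basic facts} with a per-block truncation-plus-correction, and redoing both inclusions in the proof of Theorem~\ref{thm:countable-syn} using that (a) finitely many states in a common $W_i(\omega)$ eventually collapse and (b) states in distinct $W_i(\omega)$'s never collide (since a single collision at some time forces collision at all later times by the cocycle property)---so the approach is essentially identical to the paper's intended one, just made explicit.
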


\begin{Rmk}
Theorem~\ref{thm:countable-syn} is a special case of Theorem~\ref{thm:countable-partial} when $\#\mathcal I_\omega=1.$
\end{Rmk}



\section{Applications to i.i.d random networks}
In this section, we demonstrate some applications of our theoretical findings  to certain biological, i.i.d random networks in describing their synchronization behaviors, in particular in determining
their number of synchronized subsets. In fact, as the example below will show,  our results can be applied to networks with more general external randomness, e.g., those modeled by Poisson noises if the intensity is very small. Let $\cS$ denote a discrete state set and  $\Gamma$ collect all maps on $\cS,$ together with a probability measure $Q$ on $\Gamma.$ We recall that an i.i.d dtds-RDS is that each time a map  from $\Gamma$ is randomly chosen to act on $\cS$  according to  $Q.$ The metric dynamical system $(\Omega,\cF,\mu,\theta)$ modeling the noise is simply defined by the probability space $(\Omega,\cF,\mu)=\prod_{0}^\infty(\Gamma,2^\Gamma,Q)$ together with the left-shift operator $\theta.$ 

We shall consider a particular i.i.d random network - the probabilistic Boolean network model for the biochemical dynamics of regulating
protein p53 in biological cells, followed by some discussions in treating i.i.d networks with more complexity.



\subsection{The random p53 network }\label{sec:p53}

It has been shown that the tumor suppressor, p53,  is a crucial
protein in multicellular organism that prevents cancer
develoment \cite{PK2012}. The working mechanism of p53, proposed by
Harris and Levine \cite{LH2005}, is described by a negative
feedback loop as shown in Figure \ref{feedback-loop}. In response to
an external stress signal, the cell cycle enters arrest, apoptosis,
cellular senescence, and DNA repair \cite{LH2005, PK2012}.
These events were modeled in \cite{GeQian2009} by a Boolean network. Without
the external stress, p53 is in the low steady state and the network
is determined by another set of Boolean functions \cite{GYG2017}. 
We assume an entire population of cells simultaneously experience a 
same external stress signal that is fluctuating.  The dynamics
of p53 in different cells then can be modeled by a dtds-RDS.

\begin{figure}\centering
\includegraphics[width=0.35\linewidth]{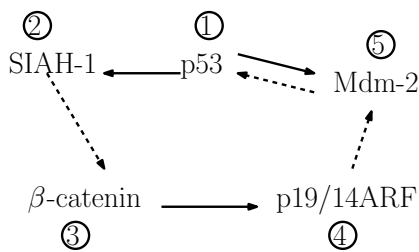}
\caption{The negative feedback loop of p53. 
Each gene is denoted as node 1-5.  The solid arrows denote stimulatory interactions, whereas the dashed arrows denote inhibitory influences.}
\label{feedback-loop}
\end{figure}

To describe the p53 dynamics using dtds-RDS, we consider the
external stress of the cell as the extrinsic noise, which comes
from, for instance, the DNA damage due to environmental 
radiation.  It can be properly modeled as a discrete-time
Poisson process with small intensity $\lambda$ as follows. Initially, all
cells start from different initial conditions and without external
stress, the p53 dynamics of all cells follow a map $B.$ Once the
external stress appears after time random $T$ with exponential 
distribution $\text{Exp}(\lambda)$, p53
dynamics of all cells follow a different map $A$ (Figure \ref{state}{\bf (A)}) and cells
will engage in the DNA repair process for a constant period of time
$T_r,$  where  $T_r$ is much smaller than the expected waiting time $1/\lambda$. Denote  the map $C$ as the $T_r$-th iteration of the map $A.$ Afterwards all cells return back to normal and follow the map
$B$ (Figure \ref{state}{\bf (B)}) again until another external stress appears.
It is possible that another external stress appears during the
repair cycle, but the probability of this happening is usually very
small, and even this happens, the cell may enter the cycle of
apoptosis. The discrete-time counterpart of Poisson process for small $\lambda$ is a Bernoulli process, e.g., a Bernoulli shift in the language of dynamical systems. More
precisely,  the noise probability space is simply
\[(\Omega,\cF,\mu)=\prod_{0}^\infty(\{C,B\},2^{\{C,B\}},\{p,1-p\}),\]
where  $Q:=\{p,1-p\}$ denotes the probability measure on $\{C,B\}$ with
$C$ and $B$ taking the measure $p$ and $1-p,$ respectively. Note that $p\approx \lambda$ if we discretize the time of the Poisson process by the unit time.
Let $\theta$ be the left-shift map on $\Omega.$ Then the product measure
$\mu=\{p,1-p\}^{\NN_0}$ is an ergodic $\theta$-invariant probability
measure on $(\Omega,\cF)$ (\cite[Theorem 1.12]{Wal}), and consequently $(\Omega, \cF,\sigma,\mu)$ is a metric  dynamical system.

\begin{figure}
	\begin{tabular}{cc}
		{\bf (A)}  \includegraphics[valign=t, width=0.32\linewidth]{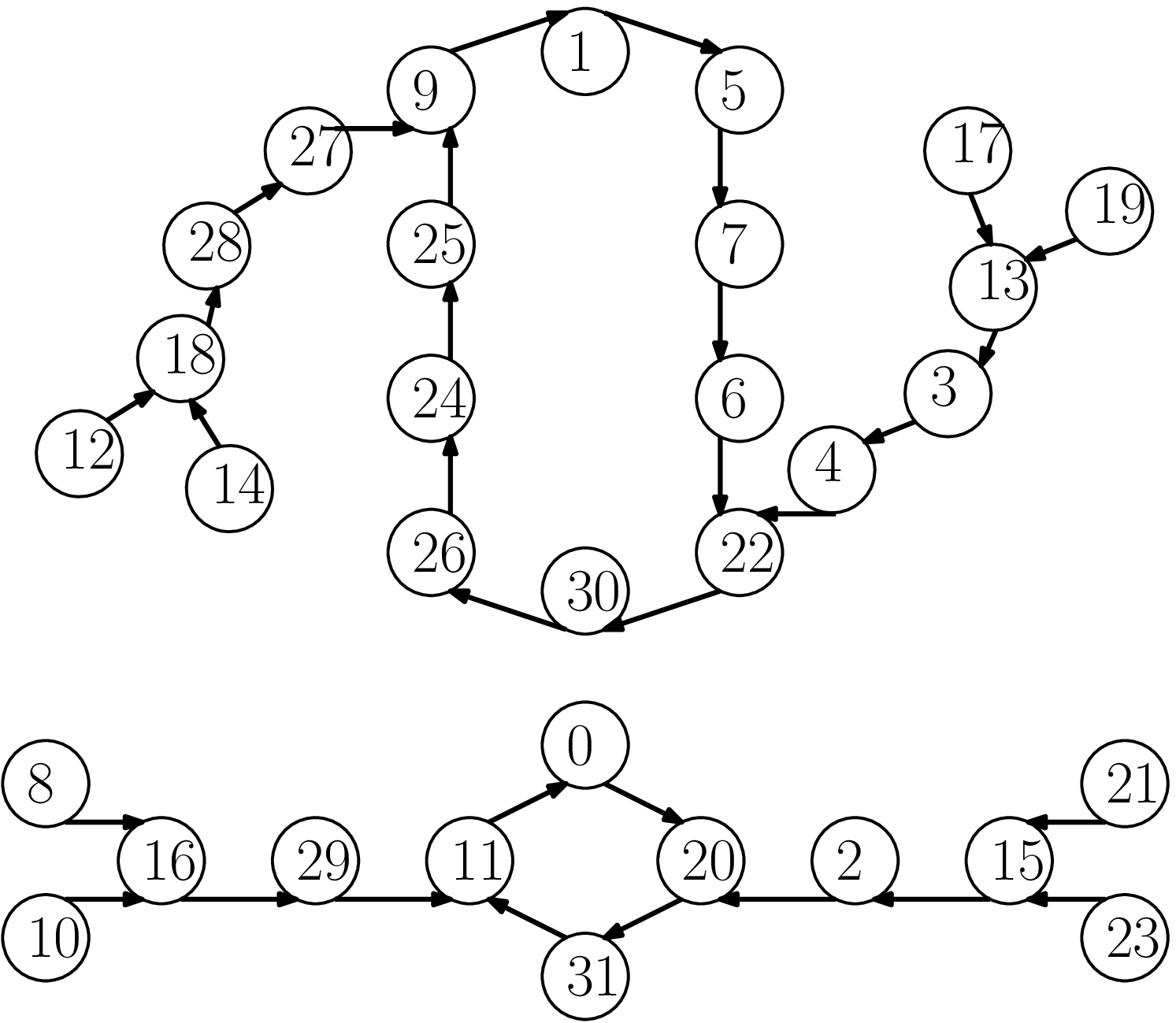} &{\bf (B)}   \includegraphics[valign=t, width=0.32\linewidth]{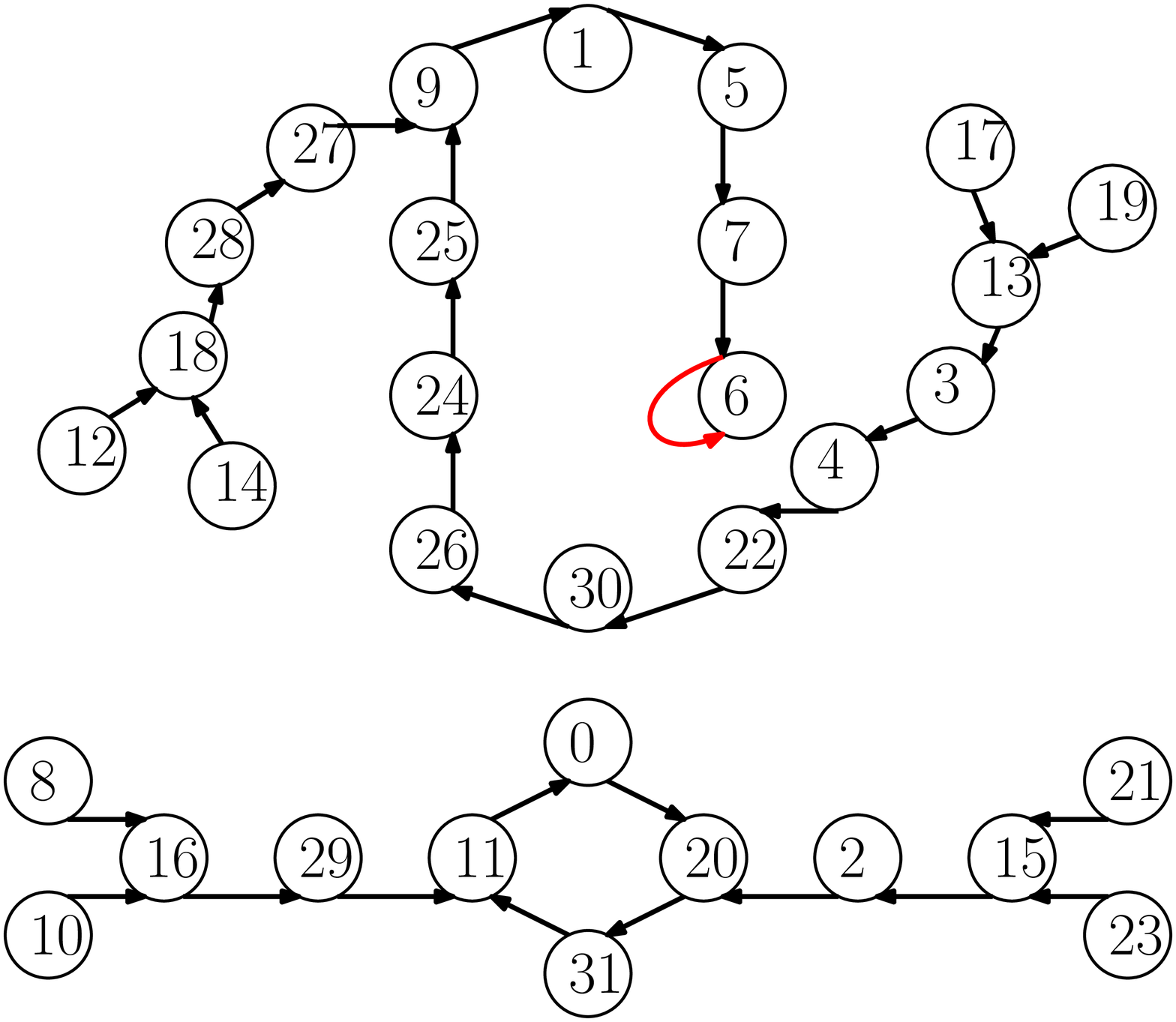}
	\end{tabular}
	\caption{ (A) The state transition map for the Boolean network corresponding to p53 dynamics in response to an external stress. (B) The state transition map corresponding to p53 dynamics in the absence of the external stress. }
	\label{state}
\end{figure}

In the p53 network as shown in Figure \ref{feedback-loop}, there are five nodes
in total and each node only admits two values, $1$ or $0,$ representing the active and inactive state respectively.  So the state set $\mathcal{S}$ is the binary expansion from 0 to 31 and in total, there are 32 states. In the state transition maps $A$ and
$B$, we use decimal numbers 0-31 to indicate the gene expression. Note that we do not plot the map $C$ which is just $A^{T_r}.$

For each $\omega\in\Omega,$ define $\cA: \cS\to\cS$ as follows
\[
\cA(n,\omega)=\cA(1,\theta^{n-1}\omega)\circ\cdots\circ\cA(1,\omega),\qquad
n\in\NN,
\]
where
\begin{eqnarray*}
\cA(1,\omega)=\begin{cases}
C,&{\text{if the $0-$th position of $\omega$ is $C$};}\\
B,&{\text{if the $0-$th position of $\omega$ is $B$}.}
\end{cases}
\end{eqnarray*}
 It is easy to see that $\cA$ satisfies the cocycle property and
hence it is an i.i.d  dtds-RDS over $(\Omega, \cF,\mu,\theta)$.

Figure \ref{state}{\bf (A)} shows that the map  $A$ admits two attractors
which are in fact two limit cycles:
\begin{eqnarray}
& &1\rightarrow 5 \rightarrow 7 \rightarrow 6 \rightarrow 22 \rightarrow 30 \rightarrow 26 \rightarrow 24 \rightarrow 25 \rightarrow 9 \rightarrow 1,\label{limit_cycle1}\\
& &0 \rightarrow 20 \rightarrow 31 \rightarrow 11 \rightarrow 0,\label{limit_cycle2}
\end{eqnarray}
respectively.
Note that the map $C$ has the same attractors as the map $A.$
However, under  the map $B,$ the  limit cycle \eqref{limit_cycle1}
collapses into a fixed point $\{6\},$ which indicates the
homeostasis of the cell, whereas the limit cycle
\eqref{limit_cycle2} remains the same.




The  dtds-RDS $\cA$ does not synchronize, because two nodes from two
different basins of attractions, like 7 and 0, will never
synchronize. However, according to Theorem A(ii), $\cA$ is always
partially synchronized, i.e., for any $\omega\in\Omega,$ there
exists a partition $\eta(\omega)$ of $\mathcal S$ such that a pair
of states belonging to the same component of $\eta$ is
$\omega$-synchronized, and moreover, the cardinality of
$\eta(\omega)$ equals the multiplicity $m_1(\omega)$ of the Lyapunov
exponent $\lambda_1(\omega)=0.$


Using Theorem A(ii), we have the following result.
\medskip

\begin{Prop}\label{prop example} For $\mu$-a.e.
$\omega\in\Omega$, $m_1(\omega)=5$, i.e., $\eta(\omega)$ consists of
$5$ $\omega$-synchronized subsets.
\end{Prop}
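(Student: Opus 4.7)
The plan is to first combine Theorem~A(ii) with the multiplicative ergodic theorem to reduce the proposition to a combinatorial statement about a typical $\omega$. Since the noise probability space is the Bernoulli system $(\Omega,\cF,\mu,\theta)$ with $\mu$ ergodic under $\theta$, Theorem~\ref{MET} implies that the multiplicities $m_i(\omega)$ of the induced linear cocycle $\cM$ are constant $\mu$-a.e.\ By Theorem~A(ii), $\#\eta(\omega) = m_1(\omega)$ for every $\omega$, so it suffices to exhibit a full-measure event $\Omega_0$ on which the $\omega$-synchronized partition $\eta(\omega)$ has exactly five components.

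The deterministic input I would extract from Figures~\ref{state}(A),(B) is twofold: both $B$ and $C$ permute the 4-element cycle $\{0, 11, 20, 31\}$ in \eqref{limit_cycle2} in the same way, namely as the single 4-cycle $0\to 20\to 31\to 11\to 0$, while the remaining $32-4 = 28$ states all lie in the basin of attraction of the fixed point $6$ under $B$. Since this basin is finite, there exists an integer $N$ with $B^N(s) = 6$ for every $s \in \cS \setminus \{0, 11, 20, 31\}$.

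Let $\Omega_0 \subset \Omega$ denote the set of $\omega$ whose symbolic representation contains a run of $N$ consecutive $B$'s. Since $\mu(\{B\}) = 1 - p > 0$, Borel--Cantelli (or ergodicity applied to the indicator of the appropriate cylinder) gives $\mu(\Omega_0) = 1$. For $\omega \in \Omega_0$, pick the smallest $m \geq 0$ with $\omega_m = \cdots = \omega_{m+N-1} = B$. Then $\cA(m+N, \omega)$ sends every state in $\cS \setminus \{0, 11, 20, 31\}$ to $6$, and because both $B$ and $C$ act as the identical 4-cycle on $\{0, 11, 20, 31\}$, they keep its four points pairwise distinct under every subsequent composition. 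Consequently, for all $n \geq m + N$, the four cycle-\eqref{limit_cycle2} states remain pairwise distinct under $\cA(n,\omega)$, while the other 28 states are all identified. Thus $\eta(\omega) = \bigl\{\{0\}, \{11\}, \{20\}, \{31\},\; \cS \setminus \{0, 11, 20, 31\}\bigr\}$ and $\#\eta(\omega) = 5$.

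The step I expect to require the most care is verifying the deterministic facts above from Figures~\ref{state}(A),(B)---that $B$ and $C$ coincide on cycle \eqref{limit_cycle2}, and that the basin of $6$ under $B$ consists of precisely the remaining 28 states. Once these are granted, the probabilistic content of the argument reduces to the straightforward ergodicity of the Bernoulli shift and the a.e.\ constancy of $m_1$ supplied by Theorem~\ref{MET}, after which Theorem~A(ii) translates this constant into $\#\eta(\omega) = 5$.
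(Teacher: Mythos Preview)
Your overall strategy---use ergodicity of $\mu$ together with Theorem~\ref{MET} to make $m_1(\omega)$ constant a.e., then compute $\#\eta(\omega)$ via Theorem~A(ii) on an event determined by a long run of $B$'s---coincides with the paper's. The paper works on the positive-measure cylinder $\Omega_*=\{\omega:\omega_0=\cdots=\omega_{14}=B\}$ rather than the full-measure set of $\omega$'s containing such a run, but this difference is immaterial.

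The genuine problem is the deterministic input you flag at the end: the basin of the fixed point $6$ under $B$ does \emph{not} consist of the remaining $28$ states. According to the paper (read off from Figure~\ref{state}), the $12$ states
\[
W_1=\{31,29,21,23\},\quad W_2=\{0,2,8,10\},\quad W_3=\{11,15\},\quad W_4=\{20,16\}
\]
form the attracting basin of the $4$-cycle \eqref{limit_cycle2} under \emph{both} $A$ and $B$; only the $20$ states of $W_5:=\cS\setminus(W_1\cup\cdots\cup W_4)$ are driven to $6$ by iterates of $B$. Thus after a long run of $B$'s the image still consists of five points (the four cycle points and $6$), but the preimage partition is $\{W_1,W_2,W_3,W_4,W_5\}$, not $\{\{0\},\{11\},\{20\},\{31\},\cS\setminus\{0,11,20,31\}\}$: a state such as $29$ synchronizes with $31$, not with $6$. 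The count $\#\eta(\omega)=5$ survives, but your description of $\eta(\omega)$ and the lower-bound argument (that the four basin classes $W_1,\dots,W_4$ never merge with one another or with $W_5$, for \emph{every} $\omega$) must be redone using these basins instead. A minor related point: $C=A^{T_r}$ acts on $\{0,11,20,31\}$ as the $T_r$-th power of the $4$-cycle, which need not equal the action of $B$; what you actually use, and what suffices, is only that both $B$ and $C$ act as permutations of this set.
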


\begin{proof} Let
\[
\Omega_*=\{\omega\in\Omega: \text{the $0$-th position of
$\theta^n\omega$ is $B$ for $n=0,\cdots, 14$}\}.
\]
By the definition of $\mu,$ we have
\[
\mu(\Omega_*)=(1-p)^{15}>0.
\]

It is easy to see that for any $\omega\in\Omega,$ there
 are at least five components of $\eta(\omega),$ and four of them constitutes the attracting basin of the
 limits cycle \eqref{limit_cycle2}, which are $W_1=\{31,29,21,23\}, W_2=\{0,2,8,10\}, W_3=\{ 11,15 \}$ and
 $W_4=\{ 20,16\},$ respectively. If $\omega\in\Omega_*$, then
 the map $B$ is applied repeatedly  for 14 times to all the 20 states in the basin $W_5$ of the limit cycle
 \eqref{limit_cycle1}, driving all these states into the state $6$.
 Consequently, any two points in $W_5$ is $\omega$-synchronized. It
 follows from  Theorem A(ii) that
 \[
m_1(\omega)=5,\qquad \omega\in\Omega_*.
 \]
 Since $\mu(\Omega_*)>0$ and $\mu$ is ergodic, it follows from Theorem~\ref{MET} that  $m_1(\omega)=5$ for $\mu$-a.e. $\omega\in\Omega.$

\end{proof}

We note that, for some other $\omega$,  
if, as $n$ increases, switchings between maps $A$ and $B$ at the
$0$-th position of $\theta^n\omega$ is rather frequent, then it may
happen that certain two different states in the limit cycle
\eqref{limit_cycle1}  never collapse, i.e.,  $W_5$ can be further
decomposed  into different synchronized subsets and $m_1(\omega)>5.$
But Proposition~\ref{prop example} says that such $\omega$'s are of zero
$\mu$-measure.

 Proposition~\ref{prop example}
 allows one to define,  up to a $\mu$-null set of
$\omega$'s, the equivalence relation of these gene expressions that
$s_i\sim s_j$, if and only if they are in the same component of
$\eta(\omega).$ To
 our knowledge, such concepts are new to the community of Boolean
networks. The extrinsic noise-induced synchronization behaviour described above is in principle different from synchronization in mechanical systems (e.g. coupled-oscillators), because no direct interactions between cells in our model is required.
It would be interesting to conduct a biological experiment to verify the partial synchronization phenomenon of multiple cells with different gene expressions exposed by the same radiation source.

 	In many realistic models of gene regulations, random perturbations are  incorporated by making the network never ``get stuck'' in any state set.  Generally speaking, a random gene perturbation means that any given gene has a small probability of being randomly flipped to a different state, e.g.,  a gene admitting value 0 can be flipped to value 1, although this only happens with very small probability \cite{SD}.
Under random gene perturbations, a partially synchronized random network usually becomes a synchronized one since states from different attracting basins may collapse together due to the random flipping effect. In fact, this can be understood from Theorem A  that the  number of synchronized subsets is equal to the multiplicity of $0$ Lyapunov exponent,  which, if being bigger than 1, is  easily  reduced to 1 under certain generic perturbations.





\subsection{More general i.i.d networks}

In the example of p53 random network, the number of state set is only 32.  However, a general i.i.d random network in reality can have tens of thousands state variables.
In general, to determine the multiplicity of $0$ Lyapunov exponent and the synchronized partition for a complex random network with a huge number of state variables along certain infinite sequence of maps are very  difficult tasks. Nevertheless, we show below that the cardinality of the synchronized partitions, i.e. the multiplicity of $\lambda_1=0$ by Theorem A(ii), for a general i.i.d dtds-RDS can be estimated, at least numerically, by using the Markov chain it induces. 
We recall from \cite{YWQ} that an i.i.d dtds-RDS  uniquely induces a Markov chain on $\cS$ with transition probability $P=(p_{ij})_{1\le i,j\le k},$ where $p_{ij}=Q\{f\in\Gamma:fs_i=s_j\}$ and  $\cS=\{s_1,\cdots,s_k\}.$

An upper bound of the multiplicity of $\lambda_1$ can be estimated by the number of recurrent states of the induced Markov chain as follows.
Given a transit state, say $s_i,$ of the Markov chain, almost all sample trajectories of the Markov chain visit $s_i$ only finite times. Then by our construction of the synchronized subsets of $\{\eta(\omega)\}$ in section \ref{sec:syn-subset}, along  almost every sequence of maps determined by the element $\omega$ in $\Omega,$ the component of $\eta(\omega)$ corresponding to the pre-image of $s_i,$ $\cA^{-1}(n,\omega)s_i,$ is an empty set  for $n$ sufficiently large. Consequently,  the number of non-trivial components of each $\eta(\omega)$ is no more than the number of recurrent states of the induced Markov chain.

As to the lower bound, we will show that the multiplicity of $\lambda_1=0$ is no less than the number of recurrent classes of the induced Markov chain. In fact,  algorithms has been developed to figure out the latter ones, 
e.g.,	if we treat this  induced Markov chain as the digraph $G=(V,E)$, then an efficient algorithm of depth-first search of the digraph could be used to identify the recurrent communicating classes in $\mathcal{O}(|V|+|E|)$ time \cite{Jarvis}.
Recall that a recurrent class of a Markov chain is the set of all recurrent states that can go to each other with positive possibilities. In other words,  any two initial states belonging to different recurrent classes will not collapse together along almost every sample trajectories of the Markov chain, i.e,  they belong to different synchronized subsets. Then Theorem A(ii) implies that the multiplicity of $\lambda_1$ is no less than the number of recurrent classes of the induced Markov chain. Inside each recurrent class, however, it is a more delicate issue to determine whether two different states belong to a same synchronized subset, for which, the two-point motion techniques (e.g. \cite{YWQ}) can be used. 
More precisely, by applying the same sequence of maps determined by an element $\omega$ in $\Omega,$  we  construct two infinite trajectories on $\cS$ starting from two different initial states. 
For the i.i.d dtds-RDS, this two-point motion induces a Markov chain on $\mathcal{S}\times \mathcal{S}$ with transition probability $W$ being \[W_{(s_i, s_j)\rightarrow (s_m,s_\ell)}=Q\{f\in\Gamma: fs_i=s_m, fs_j=s_\ell\}, \ \forall i,j,m,\ell\in\{1,\cdots,k\}.\]
Note that  when  $s_i=s_j, s_m=s_\ell$, the  transition probability is the same as that of the  Markov chain induced by the i.i.d dtds-RDS, and when $s_i=s_j, s_m\ne  s_\ell$, the transition probability is 0.
Therefore, $\{(s_1,s_1),  (s_2,s_2), \dots, (s_k,s_k)\}$ is a recurrent class the  Markov chain induced by $W.$ Furthermore, one can show that the i.i.d dtds-RDS  synchronizes if  and only if $\{(s_1,s_1),  (s_2,s_2), \dots, (s_k,s_k)\}$ is the only  recurrent class of the  Markov chain induced by the two-point motion. If there exists any other recurrent class, then for 
almost every $\omega$ in $\Omega,$  
the first state of any pair inside the class cannot be in the same component of the synchronized partition $\eta(\omega)$ as the second state.
 By Theorem A(ii),  the multiplicity of $\lambda_1=0$ within such a recurrent class for almost every $\omega$ is at least 2.  Indeed, we may better estimates the lower bound from this restriction. For instance, if $(s_i,s_j)$, $(s_j,s_\ell)$ and $(s_i, s_\ell)$ are in the same recurrent class other than the trivial one $\{(s_1,s_1),  (s_2,s_2), \dots, (s_k,s_k)\},$ then for almost every $\omega,$ $s_i, s_j$ and $s_\ell$ should be in three different components of $\eta(\omega),$ i.e., the multiplicity of $\lambda_1(\omega)=0$ in this case is at least 3.  In this way, we give a lower bound for the multiplicity of the $0$ Lyapunov exponent.  

 We illustrate the estimation by the following example. This example of 4 states comes from \cite{YWQ}. The  
deterministic maps to choose in the i.i.d dtds-RDS are 
\begin{align*}
  \Gamma=\left\{
    \underbrace{\left(\begin{array}{c}
        1\rightarrow 2   \\   2 \rightarrow 1 \\ 3\rightarrow 4 \\ 4 \rightarrow 3
          \end{array}\right)}_{\alpha_1},\  \underbrace{\left(\begin{array}{cc}
          1\rightarrow 1 \\    2\rightarrow 2 \\ 3\rightarrow 3 \\ 4 \rightarrow 4
          \end{array}\right)}_{\alpha_2}, \  \underbrace{\left(\begin{array}{cc}
       1\rightarrow 4\\   2\rightarrow 3 \\ 3\rightarrow 2 \\ 4 \rightarrow 1
          \end{array}\right)}_{\alpha_3},\  \underbrace{\left(\begin{array}{cc}
 1\rightarrow 3\\    2\rightarrow 4 \\ 3\rightarrow 1 \\ 4 \rightarrow 2 
          \end{array}\right)}_{\alpha_4} ,\  \underbrace{\left(\begin{array}{cc}
 1\rightarrow 1\\    2\rightarrow 2 \\ 3\rightarrow 3 \\ 4 \rightarrow 3 
          \end{array}\right)}_{\alpha_5} 
\right\}.
\end{align*}

Note that the maps $\alpha_1, \alpha_2, \alpha_3, \alpha_4$ are permutations, while  $\alpha_5$ is not. One can assign non-zero probability mass on each map such that the induced Markov chain is always aperiodic and irreducible. If we consider the two-point motion, then there exists a recurrent communicating class, $\{(1,3),(3,1),(2,4),(4,2),(2,3),(3,2),(4,1),(1,4)\},$ other than the trivial one. That is, if we start from any pair of these states, for some $\omega$, these two infinite long sequences will never synchronize under this i.i.d dtds-RDS. From our previous arguments, the first state cannot be in the same component as the second one in the partition $\eta(\omega),$ e.g.,  1 cannot be in the same component as 3. With this restriction, a possible minimal partition could be $\eta(\omega)=\left\{ \{1,2\},\{3,4\} \right\},$ which indicates that the multiplicity of $\lambda_1=0$ for some $\omega$ is at least 2. So we  give an estimation of the lower bound of the multiplicity.

\end{document}